\documentclass{amsart}
\usepackage{amssymb}
\usepackage{tikz}
\usepackage{verbatim}
\usepackage{hyperref}
\usepackage{orcidlink}
\usepackage[english]{babel}
\usepackage[shortlabels]{enumitem}
\usepackage{mathtools}
\usepackage{mathrsfs}
\usepackage{bm}
\usepackage{stmaryrd} 
\usepackage{graphicx}
\usepackage{float}

\sloppy
\frenchspacing

\newtheorem*{cla}{Claim}
\newtheorem*{cla1}{Claim 1}
\newtheorem*{cla2}{Claim 2}
\newtheorem*{cla3}{Claim 3}
\newtheorem*{cla4}{Claim 4}

\newenvironment{claimproof}{\paragraph{\em Proof of claim.}}{\hfill\scalebox{0.7}{$\blacksquare$}}

\newtheorem{theorem}{Theorem}[section]
\newtheorem{lemma}[theorem]{Lemma}
\newtheorem{corollary}[theorem]{Corollary}
\newtheorem{proposition}[theorem]{Proposition}

\theoremstyle{definition}
\newtheorem{definition}[theorem]{Definition}

\newtheorem{example}[theorem]{Example}

\theoremstyle{remark}

\theoremstyle{remark}

\numberwithin{equation}{section}

\newcommand{\alg}[1]{\mathbf{#1}}
\newcommand{\var}[1]{\mathcal{#1}}
\newcommand{\rel}[1]{\mathbb{#1}}

\newcommand{\pol}{\operatorname{Pol}}
\newcommand{\clo}{\operatorname{Clo}}

\newcommand{\id}{\operatorname{id}}

\DeclareMathAlphabet\mathbfcal{OMS}{cmsy}{b}{n}

\newcommand{\LL}{\mathbfcal{L}}

\newcommand{\mA}{\mathbb{A}}
\newcommand{\mB}{\mathbb{B}}
\newcommand{\mC}{\mathbb{C}}
\newcommand{\mS}{\mathbb{S}}
\newcommand{\mD}{\mathbb{D}}
\newcommand{\mE}{\mathbb{E}}
\newcommand{\mF}{\mathbb{F}}
\newcommand{\mG}{\mathbb{G}}
\newcommand{\mH}{\mathbb{H}}
\newcommand{\mM}{\mathbb{M}}
\newcommand{\mK}{\mathbb{K}}
\newcommand{\mL}{\mathbb{L}}
\newcommand{\mP}{\mathbb{P}}

\newcommand{\mY}{\mathbb{Y}}
\newcommand{\me}{\mathbb{I}}

\newcommand{\Id}{\operatorname{Id}}

\newcommand{\vege}{\hfill$\bigstar$}

\begin{document}

\title[The filter of interpretability types of Hobby-McKenzie varieties]{The filter of interpretability types of Hobby-McKenzie varieties is prime}

\author{Bertalan Bodor \orcidlink{0009-0003-6679-6355}}
\author{Gerg\H o Gyenizse \orcidlink{0000-0001-6864-8995}} 
\author{Mikl\'os Mar\'oti \orcidlink{0000-0002-3326-2512}}
\author{L\'aszl\'o Z\'adori \orcidlink{0000-0002-2383-2294}}

\address{HUN-REN Alfréd Rényi Institute of Mathematics, Budapest, Reáltanoda utca 13-15, HUNGARY 1053}
\email{bodor@renyi.hu}
\address{Bolyai Institute, Univ. of Szeged, Szeged, Aradi V\'{e}rtan\'{u}k tere 1, HUNGARY 6720}
\email{gergogyenizse@gmail.com}    
\email{mmaroti@math.u-szeged.hu} 
\email{zadori@math.u-szeged.hu}

\thanks{The research of authors was supported by Project no TKP2021-NVA-09 financed by the Ministry of Culture and Innovation of Hungary from the National Research, Development and Innovation Fund and the NKFIH grant K138892.\\
Bertalan Bodor has been funded by the European Research Council (Project POCOCOP, ERC Synergy Grant 101071674). Views and opinions expressed are however those of the authors only and do not necessarily reflect those of the European Union or the European Research Council Executive Agency. Neither the European Union nor the granting authority can be held responsible for them.
}


\begin{abstract}
We  obtain new characterizations of the Hobby-McKenzie varieties via compatible reflexive ternary structures. Based on our findings, we prove that in the lattice of interpretability types of varieties, the filter of the interpretability types of Hobby-McKenzie varieties is prime.
\end{abstract}

\maketitle

\section{Introduction}

In~\cite{GT}, Garcia and Taylor initiated a systematic study of Maltsev conditions, certain sets of identities used to describe classes of varieties of similar behavior. They introduced the lattice $\LL$ of interpretability types of varieties and used this lattice  to investigate the interrelationship between classes of varieties determined by Maltsev conditions. It turns out that Maltsev conditions correspond to certain filters (Maltsev filters) of $\LL$. This suggests that better knowledge on the structural properties of the lattice $\LL$ leads to better understanding of  Maltsev conditions.

In~\cite{GT}, one of the main questions investigated  is which classical Maltsev conditions determine a prime filter in $\LL$, and
primeness of several well known Maltsev filters of $\LL$ was decided. On many occasions, Maltsev filters are characterized as the class of varieties whose algebras satisfy some familiar congruence property. The primeness of some Maltsev filters, such as the filter of the types of congruence permutable varieties and the filter of the types of congruence modular varieties, was stated as an open problem.

The ideas presented in~\cite{GT} inspired further research on primeness of classical Maltsev conditions, see e.g.,~\cite{BS,C,KT,O,S,VW}. However, each of the results obtained is only valid in some part of~$\LL$, they are bound to some extra stipulations on the varieties whose interpretability types are considered. The most frequently used conditions are idempotency, linearity, and local finiteness.
Not long ago, in~\cite{GMZ2}, the last  three authors of this paper gave a proof that congruence permutability is indeed a prime Maltsev condition. Thus, this settled one of the open problems of Garcia and Taylor, mentioned above. The other problem on congruence modular varieties is still open.

By extending the results of Hobby and McKenzie in~\cite{HM}, Kearnes and Kiss elaborated a hierarchy of varieties based on certain Maltsev conditions in~\cite{KK}. It is natural to look at the problem of primeness for the Maltsev filters in $\LL$ that were characterized in~\cite{KK}. Recently, in~\cite{BGMZ}, the authors of this article gave a proof that the Taylor filter, one of the filters just mentioned, is prime in $\LL$. In the present paper, we prove the primeness of another Maltsev filter  arising in~\cite{KK} that consists of the interpretability types of the Hobby-McKenzie varieties. We remark, just to compare our result to the open problem on congruence modular varieties, that the Hobby-McKenzie varieties are characterized in~\cite{KK} as the varieties that satisfy a non-trivial congruence identity. We give a precise definition of the  Hobby-McKenzie varieties and the filter of the Hobby-McKenzie interpretability types in the following section, see Definition~\ref{HMv}.
 
In Figure~\ref{fig:hierarch}, we depicted the Maltsev filters of $\LL$ that were used to establish a certain hierarchy of varieties in~\cite{KK}. For the definitions of the Maltsev filters in Figure~\ref{fig:hierarch}, other than the Hobby-McKenzie filter, the interested reader should consult~\cite{KK}. A node is painted black if it symbolizes a prime Maltsev filter, is left empty if it symbolizes a non-prime filter, and is painted gray if its status is unknown at present. The filters in the figure are depicted accordingly to reverse containment. This may be unusual, but we want to keep the direction coming from the interpretability ordering. It is well known that the filters in the figure are pairwise different. We remark that the non-primeness of $n$-permutability for some $n$ was proved by the last three authors of the present paper in~\cite{GMZ1}. For the non-primeness of the other non-prime Maltsev filters in the figure, see Proposition 1.1 in~\cite{BGMZ}.

\begin{figure}[H] 
\centering
\includegraphics[scale=1]{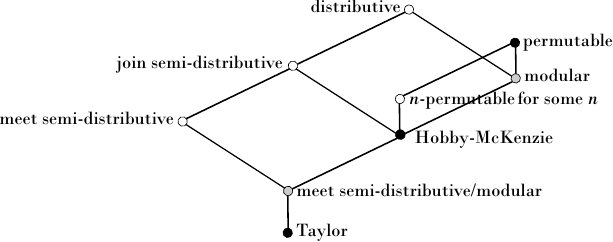}
\caption{Maltsev filters that establish a hierarchy of varieties.} 
\label{fig:hierarch}
\end{figure}


The structure of this article  is as follows. In Section 2, we give the basic definitions related to algebras, varieties, clones, and relational structures. The lattice of interpretability types of varieties and  the filter of Hobby-McKenzie varieties are defined. Some basic facts on Hobby-McKenzie varieties are presented.  In Section 3, we state and prove some general lemmas on clone homomorphisms. We need these lemmas for our characterizations  of Hobby-McKenzie varieties given in Section 5. We opted here for a much more general approach than in~\cite{BGMZ} in the hope that our general theorems obtained in Section 3  will be used to prove primeness for other Maltsev filters in the future. In Section 4, we introduce certain ternary structures called  partial semilattices and prove some statements related to them. Partial semilattices play a major role in  our characterizations in Section 5. In Section 5, we characterize the Hobby-McKenzie varieties, and prove our main result that the filter of Hobby-McKenzie varieties is prime in $\LL$.

\section{Preliminaries}

We introduce some basic concepts, definitions, and some well-known facts that we shall use in the later sections.

\subsection{Algebras and varieties}

Let $A$ be a set and $n$ a non-negative integer. A map from $A^n$ to $A$ is called an {\em $n$-ary operation} on $A$. A subset of $A^n$ is called an {\em $n$-ary relation}. We say that an $n$-ary operation $f$ {\em preserves} a $k$-ary relation $R$ if for any 
\begin{gather*}r_1= (r_{1,1},\dots, r_{1,k}),\dots, r_n=(r_{n,1},\dots, r_{n,k})\in R,\\  
f(r_1,\dots, r_n)\coloneqq (f(r_{1,1},\dots, r_{n,1}),\dots,f(r_{1,k},\dots, r_{n,k}))\in R.
\end{gather*}  
 
A {\em signature (for algebras)} is a set where a non-negative integer is assigned to each element of the set. The elements of a signature are called {\em function symbols}. If $f$ is a function symbol and $n$ is assigned to $f$, then $n$ is called the {\em arity} of $f$. We also say that $f$ is an {\em $n$-ary function symbol} in this case.
Let $A$ be a set, and  for each function symbol $f$ in a given signature, let $f'$ be an operation of the same arity as $f$ on $A$. 
Then the set $A$ with the operations $f'$ is called an {\em algebra in the given signature}. The set $A$ is called the {\em  base set} (or {\em underlying set}) of the algebra, and the operations $f'$ are called the {\em basic operations} of the algebra. Throughout the paper, we use bold face capitals and the same capitals to denote algebras and their base sets, respectively. Usually, we denote the basic operation $f'$ of an algebra $\alg A$  by  $f^{\alg A}$ for any function symbol $f$ in the signature.

Let $\alg A$ and $\alg B$ be two algebras of the same signature. We say that $\alg B$ is a {\em subalgebra} of $\alg A$ if $B\subseteq A$, and the basic operations of $\alg A$ preserve $B$, and the basic operations of $\alg B$ are those of $\alg A$, restricted to $B$. Let $C\subseteq A$. Then the smallest subalgebra that contains $C$ in  $\alg A$ is called the {\em subalgebra generated} by $C$. We say that $C$ {\em generates} $\alg A$ if $\alg A$ equals the subalgebra generated by $C$. In this case,  $C$ is called a {\em generating set of}  $\alg A$. An algebra is {\em finitely generated} if it has a finite generating set.

A map $\nu\colon A\to B$ is a {\em homomorphism} from $\alg A$ to $\alg B$ if for any function symbol $f$, say $n$-ary, and any $a_1,\dots,a_n\in A$, $$\nu(f^{\alg A}(a_1,\dots,a_n))=f^{\alg B}(\nu(a_1),\dots,\nu(a_n)).$$ A bijective homomorphism is called an {\em isomorphism}. An algebra $\alg B$ is a {\em homomorphic image} of $\alg A$ if there exists an onto homomorphism from $\alg A$ to $\alg B$. An algebra $\alg A$ is {\em isomorphic} to $\alg B$ if there exists an isomorphism from $\alg A$ to $\alg B$.

A congruence $\varrho$ of an algebra $\alg A$ is an equivalence on its base set such that the basic operations of $\alg A$ preserve $\varrho$. If $\alg A$ is an algebra of some signature and $\varrho$ is a
congruence of $\alg A$, then we define the {\em quotient algebra} $\alg A/\varrho$ of the same signature to be the algebra whose base set is the set of the blocks of $\varrho$  and whose basic operations are defined for any function symbol $f$, say $n$-ary, and any $a_1,\dots ,a_n\in A$ by
$$ f^{\alg A/\varrho}(a_1/\varrho,\dots ,a_n/\varrho)\coloneqq f^{\alg A}(a_1,\dots ,a_n)/\varrho$$
where for any $a\in A$, $a/\varrho$ denotes the block of $a$. Now, it is immediate from the definition that $\alg A/\varrho$ is a homomorphic image of  $\alg A$ under the homomorphism $a\mapsto~ a/\varrho$. It is well known, cf. Theorem 1.16 in~\cite{MMT}, that the kernel of any homomorphism from $\alg A$ to $\alg B$ is a congruence of $\alg A$ and that the homorphic images of any algebra $\alg A$ coincide with the quotient algebras of $\alg A$ up to isomorphism.

Let $\alg A_i$, $i\in I$, be algebras of the same signature. Their {\em product} $\prod\limits_{i\in I}\alg A_i$ is the algebra whose base set equals $\prod\limits_{i\in I}A_i$ and whose basic operations are defined as follows: for any function symbol $f$, say $n$-ary,  
$$f^{\prod\limits_{i\in I}\alg A_i}((a_{1,i})_{i\in I},\dots, (a_{n,i})_{i\in I})\coloneqq (f^{\alg A_i}(a_{1,i},\dots, a_{n,i}))_{i\in I}.$$

Let $S$ be any set, sometimes  called the {\em set of variables}. In a given signature, the $S$-ary terms are the finite sequences of symbols given by the following recursive definition:
\begin{enumerate}
\item the elements of $S$ are all $S$-ary terms,
\item for any function symbol $f$, say $n$-ary, and $S$-ary terms $t_1,\dots, t_n$, the term $f(t_1,\dots, t_n)$ is also an $S$-ary term.
\end{enumerate}
  
Let $\alg A$ be an algebra and $t$ be an $S$-ary term in the same signature. Then $t$  determines a map $t^{\alg A}\colon A^S\to A$ that is called the {\em $S$-ary  term operation} related to $t$  in $\alg A$  and  defined recursively as follows: 
\begin{enumerate}
\item if $t=s$ for some $s\in S$, then $t^{\alg A}$ equals the  projection to the $s$-th coordinate,
\item if $t=f(t_1,\dots, t_n)$ for an $n$-ary function symbol $f$ and $S$-ary terms $t_1,\dots, t_n$, then
$$t^{\alg A}\coloneqq f^{\alg A}(t_1^{\alg A},\dots, t_n^{\alg A}).$$                                         
\end{enumerate}

A pair of two $S$-ary terms put in the form $t=u$ is called an {\em $S$-ary identity}. An algebra $\alg A$ {\em satisfies} $t=u$ if $t^{\alg A}=u^{\alg A}$. Let $\Sigma$ be a set of $S$-ary identities. An algebra $\alg A$ {\em satisfies} (or {\em is a model of}) $\Sigma$ if $\alg A$ satisfies all the identities in $\Sigma$.
 
\begin{definition}\label{var}                 
A  class of algebras of the same signature is called a {\em variety} (or an {\em equational class}) if there is a set of $S$-ary identities whose models coincide with the members of the class. In this case, we also say that the set of the given identities {\em defines} the variety. Since the  satisfaction of an identity is determined by a finite subset of $S$, $S$ can always be assumed to be countably infinite.  We say that a variety $\var V$ satisfies a set $\Sigma$ of identities if any algebra in $\var V$ satisfies $\Sigma$.
A variety is  called {\em  locally finite} if its finitely generated members are finite. A variety is called {\em  trivial} if it only contains one-element algebras. A variety $\var W$ is a {\em subvariety} of a variety $\var V$ if they have the same signature and $\var W$ satisfies all identities satisfied by $\var V$.\vege
\end{definition}

Birkhoff's famous theorem in~\cite{Bi} asserts that the varieties are exactly the classes of algebras of the same signature that are closed under product, subalgebra, and homomorphic image of algebras.   The variety defined by the set of identities $$\{x\wedge y=y\wedge x,\ (x\wedge y)\wedge z= x\wedge (y\wedge z),\ x\wedge x=x\}$$  in a signature with a single binary operation symbol $\wedge$ plays a major role in the present article. This variety is called the {\em variety of semilattices} and denoted by $\mathcal{S}\mathcal{L}$.

\begin{definition}\label{freealg}
Let $S$ be a set, and let $T_S$ denote the set of $S$-ary terms in a given signature. The {\em term algebra} $\alg T_S$  over $S$  is the algebra with the base set $T_S$ and basic operations defined for every function symbol $f$, say $n$-ary, in the signature and every $t_1,\dots, t_n\in T_S$ by
$$f^{\alg T_S}(t_1,\dots, t_n)\coloneqq f(t_1,\dots, t_n).$$ It is clear from the definition that for any $S'\subseteq S$, $\alg T_{S'}$ is a subalgebra of $\alg T_S$. Let $\var V$ be a variety in a given signature. Then the {\em free algebra} $\alg F_{\var V}(S)$ in $\var V$ equals the factor algebra $\alg T_S/\Theta_S$ where $\Theta_S$ is the set of pairs $(t,u)$ for which $\var V$ satisfies the $S$-ary identity $t=u$. It can be shown that $\Theta_S$ is indeed a congruence of $\alg T_S$, and hence, $\alg T_S/\Theta_S$ is well defined.
Clearly, $\alg F_{\var V}(S)\in \var V$ also holds.  The blocks $s/\Theta_S$ where $s\in S$ generate $\alg F_{\var V}(S)$.
If $\var V$ is non-trivial, then these blocks are pairwise different.  This set of blocks is called a {\em free generating set of $\alg F_{\var V}(S)$}, since for any two $S$-ary terms $t$ and $u$, the identity $t=u$ is satisfied by $\var V$ if and only if the values of the term operations $t^{\alg F_{\var V}(S)}$ and $u^{\alg F_{\var V}(S)}$ coincide at the tuple $(s/\Theta_S)_{s\in S}$. 
An algebra isomorphic to $\alg F_{\var V}(S)$ is called the {\em free algebra with free generating set $S$ in $\var V$}.\vege
\end{definition}

It is useful to conceive of the free algebra with free generating set $S$ in $\var V$ in the following two ways.
\begin{enumerate}
\item We pick a term from each block of $\Theta_S$, and we think of the set of these designated $S$-ary terms as the base set of the free algebra in $\var V$. A basic operation $f$ on this set works like in $\alg T_S$, except that any value of $f$ in $\alg T_S$ is replaced by the designated $S$-ary term from the $\Theta_S$-block of the value.
\item We may think of the base set of the free algebra in $\var V$ as the set of the $S$-ary term operations of $\alg F_{\var V}(S)$. Then the basic operations of $\alg F_{\var V}(S)$ act naturally  on the set of the $S$-ary term operations, which yields the basic operations of  the free algebra.             
\end{enumerate}
For further topics on varieties, the interested reader may consult~\cite{B},~\cite{BS}, and~\cite{MMT}.

\subsection{Clones and interpretability}

A {\em clone on} $A$ is a set  of finitary operations on $A$ that contains all projection operations and is closed under composition. A {\em clone homomorphism} from a clone $\mathscr C$ to a clone $\mathscr D$ is a map from $\mathscr C$ to $\mathscr D$ that maps each projection of $\mathscr C$ to the corresponding projection of $\mathscr D$ and commutes with composition. Let $\alg A$ be an algebra. The {\em clone of} $\alg A$ denoted by $\clo (\alg A)$ is the clone of finitary term operations of $\alg A$. The {\em clone of a variety} $\var V$ is the clone of the free algebra with countably infinite free generating set.

\begin{definition}\label{interpr}
A {\em variety $\var V$ interprets in a variety $\var W$} if there is a clone homomorphism from the clone  of $\var V$ to the clone of $\var W$. We write $\var V\leq \var W$ in notation. A {\em  term reduct of a variety} $\var W$ is a variety  whose signature is a set $T$ of some terms of $\var W$, and whose identities are those that are satisfied by $\var W$ for the terms in $T$.\vege
\end{definition}

There is a way to express interpretation of varieties in terms of subvariety and term reduct: $\var V\leq\var W$ if and only if there is a variety $\var V'$ of the same signature as $\var V$ such that $\var V'$ is a subvariety of $\var V$ and $\var V'$ is a term reduct of $\var W$.
To verify the ``only if'' part of the latter statement, let us suppose that $\var V\leq \var W$, and that this is witnessed by the clone homomorphism $\alpha$.
Let $\omega$ denote the set of non-negative integers. Then the operations $\alpha(f^{\alg F_{\var V}(\omega)})$ where $f$ runs through the signature of $\var V$ yield an algebra $\alg A$ on the base set of $\alg F_{\var W}(\omega)$. Since $\alpha$ is a homomorphism, $\alg A$ satisfies all identities of $\var V$. On the other hand, the $\alpha(f^{\alg F_{\var V}(\omega)})$ are term operations of the free algebra $\alg F_{\var W}(\omega)$, hence, the set $\Sigma$ of identities given by the $\alpha(f^{\alg F_{\var V}(\omega)})$ is satisfied by each algebra in $\var W$. Since $\Sigma$ contains the identities satisfied by $\var V$, it defines a subvariety $\var V'$ of $\var V$. Clearly, $\var V'$ also is a term reduct of $\var W$. The ``if''  part of the statement also holds, since any variety interprets in each of its subvarieties, and any term reduct of a variety $\var W$ interprets in $\var W$.

For an example of an interpretation, we consider two varieties. Let $\mathcal{S}\mathcal{E}\mathcal{T}$ be the {\em variety of sets}, the variety defined by the empty set of identities in the empty signature. Let $\mathcal{S}\mathcal{G}$ be the {\em variety of semigroups}, the variety defined by the single identity $(xy)z=x(yz)$  in the signature of one binary operation symbol. Clearly, there is a clone homomorphism from the clone of $\mathcal{S}\mathcal{E}\mathcal{T}$ to the clone of $\mathcal{S}\mathcal{G}$, mapping the projections to the corresponding projections. So, $\mathcal{S}\mathcal{E}\mathcal{T}\leq \mathcal{S}\mathcal{G}$. On the other hand, there is also a clone homomorphism from the clone of $\mathcal{S}\mathcal{G}$ to the clone of $\mathcal{S}\mathcal{E}\mathcal{T}$, mapping the product operation to the projection to the first coordinate. Thus,  $ \mathcal{S}\mathcal{G}\leq\mathcal{S}\mathcal{E}\mathcal{T}$ also holds. 
Hence, by the definition just follows,  $\mathcal{S}\mathcal{E}\mathcal{T}$ and $ \mathcal{S}\mathcal{G}$ have the same interpretability type.

\begin{definition}\label{lattice}
As easily seen, interpretability is a quasiorder on the class of varieties. The blocks of this quasiorder are called  {\em interpretability types}. By following~\cite{GT}, we define the {\em lattice $\LL$ of interpretability types of varieties} that is obtained by taking the quotient of the class of varieties quasiordered by interpretability and the related equivalence.

The join in  $\LL$ is described as follows. Let $\var{V}_1$  and $\var{V}_2$ be two varieties of disjoint signatures. Let $\var{V}_1$  and $\var{V}_2$  be defined by the sets $\Sigma_1$ and $\Sigma_2$ of identities, respectively. Their {\em  join} $\var{V}_1\vee\var{V}_2$ is the variety defined by the set of identities $\Sigma_1\cup\Sigma_2$ in the signature that equals the union of the signatures $\var{V}_1$  and $\var{V}_2$. The so defined join is compatible with the interpretability relation of varieties, and naturally yields the join operation in $\LL$.

In this article, the meet operation of $\LL$ is not used, but for completeness we describe it. The meet $\var V\wedge\var W$ of  varieties $\var {V}$  and $\var W$ has the signature  $\cup_{n=0}^\infty T_n\times S_n$ where $T_n$ and  $S_n$ are the sets of $n$-ary terms of $\var V$ and  $\var W$, respectively.  For any $\alg A\in \var V$  and $\alg B\in \var W$, we define an algebra $\alg C$  with base set $A\times B$ where for any 
$(s,t)\in  T_n\times S_n$, $a_1,\dots,a_n\in A$, and $b_1,\dots,b_n\in B$
$$(s,t)^{\alg C}((a_1,b_1),\dots,(a_n,b_n))= (s^{\alg A}(a_1,\dots,a_n), t^{\alg B}(b_1,\dots,b_n)). $$
Now, $\var V\wedge\var W$  is the variety that consists of the isomorphic copies of the so defined algebras $\alg C$. The meet of varieties is compatible with the interpretability relation of varieties, and naturally yields the meet operation in $\LL$.\vege
\end{definition}

We note that the interpretability types of varieties actually form a proper class. Also, $\LL$ has a smallest and a largest element. The smallest element of $\LL$ is the interpretability type of the variety of sets. The largest element of $\LL$ is the interpretability type that contains the trivial varieties.

\begin{definition}\label{Maltsev} 
A {\em strong Maltsev condition} is a finite set of identities in some signature. A {\em Maltsev condition} is a sequence $M_n,\ 0\leq n,$ of strong  Maltsev conditions where the variety defined by $M_{n+1}$ interprets in the variety defined by $M_n$ for each $n\geq 0$. We say that a {\em variety $\var V$ admits} this Maltsev condition   if the variety defined by some of the  $M_n$ interprets in $\var V$.  In this case, we also say that $\var V$  {\em admits the terms} of the given Maltsev condition. An upwardly closed sublattice in a lattice is meant by a {\em filter}. Clearly, the types of the varieties that admit a particular Maltsev condition form a filter in $\LL$.  Such filters of $\LL$ are called {\em Maltsev filters}. A Maltsev filter is called {\em a strong Maltsev filter} if it consists of the types of the varieties that admit a particular strong Maltsev condition\vege
\end{definition}

\begin{example}\label{msl}
The following sequence of sets of identities determines a Maltsev condition. For any non-negative integer $n$, let $M_n$ be the set of identities:
\begin{itemize}
\item[] $f_0(x,y,u,v)=x,$
\item[] $f_i(x,y,x,y)=f_{i+1}(x,y,x,y)$ and
\item[] $f_i(x,x,y,y)=f_{i+1}(x,x,y,y)$ for even $0\leq i\leq 2n$,
\item[] $f_i(x,x,x,y)=f_{i+1}(x,x,x,y)$ for odd $0\leq i\leq 2n$,
\item[] $f_{2n+1}(x,y,u,v)=v.$
\end{itemize}

To see that the above sequence $M_n,\ n\geq 0$ is indeed a Maltsev condition, just observe that the variety $\var V$ defined by $M_n$ admits $M_{n+1}$. This fact is witnessed by the 4-ary terms $f_0(x,y,u,v),\dots,f_{2n+1}(x,y,u,v) $ and $f_{2n+2}=f_{2n+3}=v$ of $\var V$.\vege
\end{example}

A filter is {\em proper} if it is non-empty and it differs from the whole lattice. A proper filter $F$ in a lattice is called {\em prime} if for any two elements $a,b\not\in F$,  we have $a\vee b\not\in F$. To gain information on the structural properties of $\LL$, it is useful to determine which of the well-known classical Maltsev filters are prime filters in $\LL$. In case a Maltsev filter is (not) prime, we just say sometimes that the related Maltsev condition is (not) prime.

\subsection{Hobby-McKenzie varieties}

Let $A$ be a set. An $n$-ary operation $f$ on $A$ is {\em idempotent} if $f(a,\dots,a)=a$ for all $a\in A$. An {\em algebra  is idempotent} if its all basic operations are idempotent.  An $n$-ary  term $t$ of a variety $\var V$ is called an {\em idempotent term} if $\var V$ satisfies the identity $t(x,\dots,x)=x$. A {\em variety $\var V$ is idempotent} if all algebras in the variety are idempotent, or equivalently, for all $n$, all $n$-ary terms of $\var V$ are idempotent.
 
\begin{definition}\label{HMv}
An identity is {\em linear} (or {\em of height at most one}) if it has at most one occurrence of a function symbol on each side of the identity. A {\em Hobby-McKenzie term} of a variety $\var V$ is an $n$-ary idempotent term $t$ for some $n$ such that  for each non-empty subset $I$ of $\{1,\dots, n\}$, $\var V$ satisfies a linear identity of the form $t(\dots)=t(\dots)$ in two different variables $x$ and $y$ where for each $i\in I$, the variable in the $i$-th position on the left side equals $x$, and for some $i\in I$, the variable in the $i$-th position on the right side equals $y$. In other words, $t$ is idempotent and satisfies a set of linear identities that fails to be satisfied by any term of $\mathcal{S}\mathcal{L}$. A variety is a {\em  Hobby-McKenzie variety} if it admits a Hobby-McKenzie term. We name the interpretability types of these varieties {\em Hobby-McKenzie interpretability types}. In~\cite{KK}, the Hobby-McKenzie varieties were characterized by various equivalent conditions. In particular, it was shown that a variety $\var V$ is Hobby-McKenzie if and only if $\var V$ admits the Maltsev condition in Example~\ref{msl}. So, the Hobby-McKenzie interpretability types form a Maltsev filter in $\LL$. This filter  is called the  {\em Hobby-McKenzie filter}.\vege 
\end{definition}

In a lattice, a filter is called a {\em principal filter} if it consists of all elements that are greater than or equal to some element of the lattice. Clearly, any strong  Maltsev filter is a principal filter in $\LL$. On the other hand, any Maltsev filter that is a principal filter is easily seen to be a strong Maltsev filter. In~\cite{KKVW}, it was shown that there is no strong Maltsev condition that characterizes the class of Hobby-McKenzie varieties. In other words, the Hobby-McKenzie filter is not a strong Maltsev filter or, equivalently, it is not a principal filter in $\LL$.

The {\em  full idempotent reduct of a variety} $\var V$ is the variety  whose signature is the set of idempotent terms of $\var V$, and whose identities are those satisfied by $\var V$ for its idempotent terms. Let $\var V_{\Id}$ denote the full idempotent reduct of $\var V$. We require the following characterization of Hobby-McKenzie varieties.

\begin{theorem}[cf. Lemma 9.5  in~\cite{HM}]\label{THM} Let $\var V$ be a variety. The following are equivalent.
\begin{enumerate}
\item $\var V$  is a Hobby-McKenzie variety.
\item $\var V_{\Id}\not\leq \mathcal{S}\mathcal{L}.$
\end{enumerate}
\end{theorem}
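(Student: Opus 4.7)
The plan is to set up the correspondence between clone homomorphisms $\phi\colon\clo(\var V_{\Id})\to\clo(\mathcal{S}\mathcal{L})$ and assignments $t\mapsto J_t$, where $J_t$ is a non-empty subset of $\{1,\dots,n\}$ for each $n$-ary idempotent $\var V$-term $t$; the assignment corresponds to $\phi(t):=s_{J_t}=\bigvee_{j\in J_t}x_j$, and it must respect projections, substitution ($J_{t(s_1,\dots,s_m)}=\bigcup_{j\in J_t}J_{s_j}$), and $\var V_{\Id}$-equivalence of terms.

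For $(1)\Rightarrow(2)$ I would argue by contradiction. Assume $\var V$ has a Hobby-McKenzie term $t$ of arity $n$ and that some such $\phi$ exists, writing $\phi(t)=s_J$. Instantiating the Hobby-McKenzie condition for $I:=J$, there is a linear identity $t(\vec u)=t(\vec v)$ in two variables $x,y$ satisfied by $\var V$, with $u_i=x$ for all $i\in J$ and some $v_j=y$ for $j\in J$. Applying $\phi$ yields $s_J(\vec u)=s_J(\vec v)$ in $\mathcal{S}\mathcal{L}$; but $s_J(\vec u)=x$ while $s_J(\vec v)\in\{y,x\vee y\}$, a contradiction since these are distinct in the free $\mathcal{S}\mathcal{L}$-algebra on $\{x,y\}$.

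For $(2)\Rightarrow(1)$ I would argue the contrapositive: assume no $\var V$-term is Hobby-McKenzie and construct $\phi$. For each $n$-ary idempotent $t$, some non-empty $I_t\subseteq\{1,\dots,n\}$ witnesses the failure: no linear $\{x,y\}$-identity $t(\vec u)=t(\vec v)$ holds in $\var V$ with $u_i=x$ for all $i\in I_t$ and some $v_j=y$ for $j\in I_t$; by the $x\leftrightarrow y$ symmetry of identities, the analogue with $x$ and $y$ interchanged also holds. Setting $J_t:=I_t$, any single-term linear identity $t(\vec u)=t(\vec v)$ satisfied by $t$ has $\{u_j:j\in J_t\}=\{v_j:j\in J_t\}$ as subsets of $\{x,y\}$, which is exactly the condition for $s_{J_t}(\vec u)=s_{J_t}(\vec v)$ to hold in $\mathcal{S}\mathcal{L}$.

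The main obstacle is upgrading this term-by-term choice to a globally consistent assignment: $J_t$ must depend only on the $\var V_{\Id}$-class of $t$ and must commute with substitution. I would recast this as constructing a $2$-element algebra $\alg A\in\var V_{\Id}$ on $\{0,1\}$ whose basic operations are all semilattice joins, working inside the $2$-generated free algebra $\alg F:=\alg F_{\var V_{\Id}}(\{x,y\})$. The non-Hobby-McKenzie witness $I_t$ separates $\{x,y\}^n$ into tuples "all $x$ on $I_t$" and tuples "some $y$ on $I_t$" whose $t$-images in $\alg F$ lie in disjoint subsets, giving a local two-valued labelling. The technical heart will be to coordinate the choices of $I_t$ across all terms so that these labellings glue into a single homomorphism $\alg F\to\{0,1\}$; I would approach this inductively along term complexity, exploiting at each stage the freedom in selecting $I_t$ from the family of non-Hobby-McKenzie witnesses of $t$.
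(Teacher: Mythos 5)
Your implication $(1)\Rightarrow(2)$ is correct and complete: a clone homomorphism sends the $n$-ary idempotent $t$ to some $s_J=\bigvee_{j\in J}x_j$ with $J\neq\emptyset$, and the Hobby--McKenzie identity for $I=J$ evaluates to $x$ on one side and to $y$ or $x\vee y$ on the other. The local analysis in your converse (a witness $I_t$ forces $\{u_j\colon j\in I_t\}=\{v_j\colon j\in I_t\}$ for every valid two-variable linear identity of $t$) is also right. Note that the paper does not prove this theorem at all --- it is quoted from Lemma 9.5 of \cite{HM} --- so the only thing to assess is whether your argument stands on its own. It does not: everything after ``the main obstacle is upgrading this term-by-term choice'' is precisely the content of the cited lemma, and the strategy you sketch for it is not viable.

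Two concrete problems. First, induction on term complexity cannot produce an assignment that is constant on $\var V_{\Id}$-equivalence classes: a single term operation has representatives of unbounded syntactic complexity, so there is no induction step that respects the equivalence. Second, and more seriously, the witness sets $W(t)$ only encode information about instances $t(\vec u)=t(\vec v)$ with $\vec u,\vec v\in\{x,y\}^n$, whereas the substitution requirement $J_{t(s_1,\dots,s_m)}=\bigcup_{j\in J_t}J_{s_j}$ forces you to control $t$ on the tuples $(s_1(\vec u),\dots,s_m(\vec u))$, whose entries are arbitrary elements of the two-generated free algebra rather than generators; so membership of $\bigcup_{j\in J_t}J_{s_j}$ in $W(t(s_1,\dots,s_m))$ simply does not follow from the term-by-term witness property. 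Nor does the ``freedom in selecting $I_t$'' give you room to maneuver: one checks easily that $W(t)$ is closed under unions (and $W(\pi_i)=\{\{i\}\}$), so the choice is essentially canonical --- take the largest witness --- and the substitution identity still does not follow. The genuine proof has to work globally in $\alg F_{\var V_{\Id}}(x,y)$: one builds a two-block partition of the entire free algebra (equivalently, a surjection onto a two-element ``semilattice-like'' algebra in $\var V_{\Id}$) compatible with all term operations simultaneously, and only then reads off $J_t$ coordinatewise. That global construction is exactly what Lemma 9.5 of \cite{HM} supplies and what is missing from your proposal.
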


By the use of the preceding theorem, it follows that if $\var V_1$ and $\var V_2$ are idempotent non-Hobby-McKenzie varieties, then they both interpret in $\mathcal{S}\mathcal{L}$. So, their join also interprets in  $\mathcal{S}\mathcal{L}$. Hence, in the sublattice of interpretability types of idempotent varieties, the filter of the types of idempotent Hobby-McKenzie varieties is prime. The main result of this paper states that we can drop idempotency from this statement, that is, the Hobby-McKenzie filter is prime in $\LL$.

\subsection{Relational structures and connectivity}

A {\em signature (for relational structures)} is a set where a non-negative integer is assigned to each element of the set. The elements of a signature are called {\em relation symbols}. If $R$ is a relation symbol and $n$ is assigned to $R$, then $n$ is called the {\em arity} of $R$. We also say that $R$ is an {\em $n$-ary relation symbol} in this case.
Let $G$ be a set, and  for each relation symbol $R$ in a given signature, let $R'$ be a relation of the same arity as $R$ on $G$. 
Then the set $G$ with the relations $R'$ is called a {\em relational structure in the given signature}. The set $G$ is called the {\em  base set} of the relational structure. Throughout the paper, we use blackboard bold capitals and the same capitals to denote relational structures and their base sets, respectively. For any relation symbol $R$ in the signature of a relational structure $\mG$, we denote the corresponding relation $R'$ of $\mG$ by $R^{\mG}$. On some occasions, though, we leave off the superscript to ease up notation.

A map $\varphi\colon G\to H$ is called a {\em homomorphism from $\mG$ to $\mH$} if 
$\varphi$ maps any tuple of any relation of $\mG$ to a tuple of the corresponding relation of $\mH$. If $\varphi$ is a homomorphism from $\mG$ to $\mH$, we also say that $\varphi$ {\em preserves} the relations of $\mG$.
For a homomorphism $\varphi\colon \mG\to \mH$, we denote by $\varphi(\mG)$ the relational structure whose base set is $\varphi(G)$ and whose relations are the images of the relations  of $\mG$ under $\varphi$. We say that $\mH$ is a {\em weak substructure} of $\mG$ if every element of $\mH$ is an element of $\mG$ and every tuple  of a relation of $\mH$ is in the corresponding relation of $\mG$.  We use $\mH\subseteq\mG$ to denote that $\mH$ is a weak substructure of $\mG$. For example,  $\varphi(\mG)\subseteq \mH$ for any homomorphism $\varphi\colon \mG\to \mH$. We call $\mH$ a {\em substructure of }$\mG$, if $\mH\subseteq\mG$ and every relation of $\mH$ coincides with the restriction of the corresponding relation of $\mG$ to $H$.
 
A homomorphism  $\alpha\colon \mG \to \mH$ is a {\em retraction}  if there is a homomorphism $\beta\colon \mH \to~\mG$ such that  $\alpha\beta$ coincides with the identity map on $H$. Then  the homomorphism $\beta$ is called a {\em coretraction} and the structure $\mH$ is called a {\em retract} of $\mG$.  A bijective retraction is called an {\em isomorphism}. The structures $\mG$ and $\mH$ are {\em isomorphic} if there exists an isomorphism from $\mG$ to $\mH$. We remark that if $\mH$ is a retract of $\mG$  with a coretraction $\beta$, then $\beta(\mH)$ is a substructure isomorphic to $\mH$ in $\mG$.

Let $\varrho$ be an equivalence of a relational structure $\mG$. We define the {\em quotient structure} $\mG/\varrho$ of the same signature as $\mG$ to be the relational structure whose base set is the set of the blocks of $\varrho$,  and for any relational symbol $R$,  $R^{\mG/\varrho  }$ consists of the tuples $r/\varrho$ where $r\in R^{\mG}$ and each component of $r/\varrho$ is the block of the related component of $r$.  We note that the map $a \mapsto a/\varrho$ is a homomorphism from $\mG$ to $\mG/\varrho$. Moreover, for any homomorphism $\varphi\colon \mG\to \mH$, $\varphi(\mG)$ is isomorphic to $\mG/\ker(\varphi)$ where $\ker(\varphi)$ denotes the kernel of $\varphi$.
 
Let $I$ be an arbitrary set, and let $\mH_i$ be relational structures of the same signature where $i\in I$. The {\em disjoint union} $\dot\bigcup_{i\in I}\mH_i$ of the $\mH_i$ is the relational structure of the same signature as the $\mH_i$, $i\in I$, whose base set is the disjoint union of the base sets $H_i$, $i\in I$, and whose relations are the disjoint unions of the corresponding relations of the $\mH_i$, $i\in I$. 
The {\em product} $\prod\limits_{i\in I}\mH_i$ of the $\mH_i$ is the relational structure of the same signature as the $\mH_i$, $i\in I$, whose base set equals $\prod\limits_{i\in I} H_i$ and whose relations are defined as follows: for any relation symbol $R$, say $k$-ary,  $$((h_{1,i})_{i\in I},\dots, (h_{k,i})_{i\in I})\in R^{\prod\limits_{i\in I} \mH_i}\text{ iff } (h_{1,i},\dots, h_{k,i})\in R^{\mH_i}  \text{ for all }{i\in I}.$$ The {\em $I$-th power} $\mH^I$ of $\mH$ is defined to be the relational structure $\prod\limits_{i\in I}\mH_i$ where $\mH_i=\mH$ for all $i\in I$.   A homomorphism from $\mH^n$ to $\mH$ is called an $n$-ary {\em polymorphism} of $\mH$. 
We note that the polymorphisms of $\mH$ coincide with the finitary operations of $H$ that preserve the relations of $\mH$. Let $\pol(\mH)$ denote the clone of polymorphisms of $\mH$. 

\begin{definition}\label{comp}
We say that a {\em relational structure $\mG$  is compatible in an algebra} $\alg A$ if  $G=A$ and the basic operations of $\alg A$ preserve the relations of $\mG$. A {\em relational structure is compatible in a variety} if it is compatible in an algebra of the variety.~\vege\end{definition}

It is easy to see that  a relational structure $\mG$ is compatible in a variety $\var V$ if and only if there is a clone homomorphism from $\clo(\var V)$ to $\pol(\mG)$. Hence, if $\var V\leq \var W$ and $\mG$ is compatible in $\var W$, then $\mG$ is also compatible in $\var V$.

We also have that if $\mG$ is compatible in the varieties $\var V_1$ and $\var V_2$ of disjoint signatures, then  $\mG$ is compatible in $\var V_1\vee \var V_2$ as well.  Indeed, the premise gives that for $1\leq i\leq 2$, there exist $\alg A_i\in \var V_i$ such that $A_i=G$ and the basic operations of $\alg A_i$ preserve the relations of $\mG$. Now, let $\alg B$ denote the algebra on $G$ with basic operations  $f_i^{\alg A_i}$  for any $1\leq i\leq 2$ where $f_i$ is an arbitrary function symbol in the signature of $\var V_i$. Then $\alg B\in \var V_1\vee \var V_2$ and $\mG$ is compatible in $\alg B$, and hence, in $\var V_1\vee \var V_2$ as well.

Now we introduce a notion of connectivity  for relational structures, and then, we shall  prove a related lemma that will be frequently used in the later sections.
	
\begin{definition} We define the {\em binary projection} $\mB$ of a relational structure $\mG$ as follows: $\mB$ is  a binary relational structure on $G$ such that the signature of $\mB$ consists of the binary relational symbols $R_{i,j}$ where $R$ is a $k$-ary relational symbol in the signature of $\mG$ and $1\leq i<j \leq k$, and the relations of $\mB$ are the
$$R_{i,j}^\mB=\{(a_i, a_j)\colon (a_1, \dots, a_k) \in R^{\mG}\}.$$  We note that  $\pol(\mG)\subseteq \pol(\mB)$. An {\em oriented path} of $\mB$ is a sequence of elements $b_0\dots,b_t$ in $B$ where for all $0\leq s\leq t-1$ either $(b_s,b_{s+1})$ or $(b_{s+1},b_s)$ is in some relation of $\mB$. We define the {\em connectivity equivalence} $\alpha$  of a relational structure $\mG$ by $$(a,b)\in \alpha\text{ if and only if there is an oriented path  from } a\text{ to }b\text{ in }\mB.$$  It is  easy to see that $\alpha$ indeed is an equivalence on $G$. The blocks of the connectivity equivalence are called the ({\em connected}) {\em components} of $\mG$. Clearly, every relational structure $\mG$ may be regarded as the disjoint union of its components. We say that $\mG$ {\em  is connected} if it has precisely one component.  Clearly, if $\mG$ is connected, then for any homomorphism $\alpha\colon \mG\to\mH$, $\alpha(\mG)$ is also connected.\vege
\end{definition}

A relation on $G$ is called {\em reflexive} if it contains all of the constant tuples of $G$. A {\em relational structure  is reflexive} if all of its relations are reflexive. Clearly, if $\mG$ is reflexive, then its binary projection $\mB$ is reflexive, and for any homomorphism $\alpha\colon \mG\to\mH$, $\alpha(\mG)$ is also reflexive. We require the following lemma on connectivity of reflexive structures.

\begin{lemma}\label{connected} The following hold.
\begin{enumerate}
\item Let $\mH_i$ be reflexive relational structures of the same signature for $1\leq i\leq n$. Let $\mH_i=\dot\bigcup_{u\in U_i}\mH_{i,u}$, where the $\mH_{i,u}$, $u\in U_i$, are the components of $\mH_i$. Then the components of $\prod\limits_{i=1}^{n} \mH_i$ are the relational structures of the form $\prod\limits_{i=1}^{n} \mH_{i,u_i}$ where $u_1\in U_1,\dots u_n\in U_n$ are arbitrary elements.

\item Let $\mG$ be a reflexive relational structure. Then the connectivity equivalence of $\mG$ is preserved by the polymorphisms of 
$\mG$. 
 
\item Let $I$ be a set and $\mH$ a reflexive connected relational structure. If $I$ or $\mH$ is finite, then $\mH^I$ is a reflexive connected relational structure.

\item Let $K$ be a finite set of reflexive connected relational structures of the same signature such that all powers of them are connected. Then any product with (possibly multiple) factors from $K$ is connected.

\item Let $K$ be a finite set of reflexive connected relational structures of the same signature such that all powers of them are connected.
Let $\mH=\prod\limits_{i\in I}\mH_i$ be a relational structure where for any $i\in I$, the components of $\mH_i$ are products of relational structures from $K$. Then the components of $\mH$ are of the form $\prod\limits_{i\in I} \mC_i$ where $\mC_i$ is a component of $\mH_i$ for any $i\in I$.

\end{enumerate}
\end{lemma}
\begin{proof} 
The proof of the lemma is based on the following claim.
\begin{cla}
Let $\mB_i$ be arbitrary reflexive connected binary structures of the same signature where $1\leq i\leq k$. Let $a_i$ and $b_i$ be arbitrary elements in $\mB_i$ for any $1\leq i\leq k$. Then there exist $m$, relation symbols $R_1,\dots, R_m$, and for each $1\leq i\leq k$, an $(a_i,b_i)$-path in $\mB_i$ of the form $a_i=c_{i,0},\dots,c_{i,m}=b_i$ such that  for all $1\leq j\leq m$ either for all $1\leq i\leq k$, $(c_{i,j-1},c_{i,j})\in R_j$ or for all $1\leq i\leq k$, $(c_{i,j-1},c_{i,j})\in R^{-1}_j$.
\end{cla}
\begin{claimproof} 
To prove the Claim, we may start with some $(a_i,b_i)$-paths $P_i$ of length $\ell_i$ in $\mB_i$ where $1\leq i\leq k$.  We think of $P_i$ as a labeled path in $\mB_i$ where any edge is labeled by a  relation symbol $R$ or $R^{-1}$, meaning that the edge is contained in $R^{\mB_i}$ or $(R^{\mB_i})^{-1}$.  We shall synchronize the $P_i$, that is, we construct new $(a_i,b_i)$-paths of the same length such that all have the same type of edge labeling. First, we synchronize the paths $P_1$ and $ P_2$. We make a new $(a_1, b_1)$-path $P'_1$ from $P_1$ by adding $\ell_2$ loop edges to $P_1$ at the end of $P_1$. The labeling of the new edges by relation symbols  mirrors back the labeling of the edges of $P_2$.  Since $\mB_1$ is reflexive,  $P'_1$ is indeed a new labeled $(a_1, b_1)$-path in $\mB_1$. Now, we make a new labeled $(a_2, b_2)$-path $P'_2$ from $P_2$ by adding $\ell_1$ loop edges to $P_2$ at the beginning of $P_2$. The labeling of the new edges  mirrors back the labeling of the edges of $P_1$. Since $\mB_2$ is reflexive,  $P'_2$ is indeed a new labeled $(a_2, b_2)$-path in $\mB_2$. Now, it is clear that both $P'_1$ and $P'_2$ have length $\ell_1+\ell_2$ and their corresponding edges are labeled by the same relation symbol $R$ or $R^{-1}$. By iterating this procedure, in $k-1$ steps, we get to new labeled $(a_i,b_i)$-paths that are all synchronized, which finishes the proof.
\end{claimproof}

To prove (1), all we have to show is that the $\prod\limits_{i=1}^{n} \mH_{i,u_i}$ are connected. (Clearly, each tuple of each relation of $\prod_{i=1}^n\mH_i$ lies entirely in one such product, so the connected components cannot be larger.)

Let $\mG_i=\mH_{i,u_i}$ where $1\leq i\leq n$. We know that these relational structures are connected. Let $\mB_i$ be the binary projection of $\mG_i$ where $1\leq i\leq n$ and $\mB$ the binary projection of $\prod\limits_{i=1}^{n} \mG_i$. Since the $\mG_i$ are connected, the $\mB_i$ are also connected. 
Clearly, $\mB=\prod\limits_{i=1}^{n} \mB_i$. We apply the Claim to the  $\mB_i$. So, for each $1\leq i\leq k$ there exists an $(a_i,b_i)$-path in $\mB_i$ such that these paths have the same length and their related edges have the same labeling by relations of the $\mB_i$. By the definition of product, this means that there is  a same type of labeled path connecting $(a_1,\dots, a_n)$ and $(b_1,\dots, b_n)$ in $\prod\limits_{i=1}^{n} \mB_i=\mB$. So, $\prod\limits_{i=1}^{n} \mG_i$ is connected, which concludes the proof of (1).

To prove (2), let $f$ be an $n$-ary polymorphism  of $\mG$, and  for any $1\leq i\leq n$ let $\mG_i$ be an arbitrary component of $\mG$. By item (1), the product $\prod\limits_{i=1}^n\mG_i$ is connected. Since $f$ is a polymorphism, $f(\prod\limits_{i=1}^n\mG_i)$ is also connected. Thus, $f$ preserves connectivity.

The claim when $I$ is finite in item (3), is an immediate consequence of item (1). For the case, when $\mH$ is finite, let $\mB$ be the binary projection of $\mH$. Then the binary projection of $\mH^I$ equals $\mB^I$. For any two elements $(a_i)_{i\in I}$ and $(b_i)_{i\in I}$ of $B^I$ and for any $i\in I$, we take an $(a_i,b_i)$-path in $\mB$ such that the number of these paths altogether is finite. This can be done since $\mB$ is connected and finite. By the use of the Claim, we may assume that these finitely many paths are synchronized. Now, these synchronized $(a_i,b_i)$-paths witness the fact that that there is a path from $(a_i)_{i\in I}$ to $(b_i)_{i\in I}$ in $\mB^I$. Thus, $\mH^I$ is connected.

Now, we prove item (4). Clearly, any product with factors from $K$ is isomorphic to the product of finitely many powers of relational structures from $K$. Since these powers are reflexive and assumed to be connected, their product is also connected by item (1).

For item (5), it suffices to prove that $\mC=\prod\limits_{i\in I} \mC_i$ is connected. As the $\mC_i$ are isomorphic to products with factors from $K$, $\mC$ is also isomorphic to a product of same kind.  By item (4), such a product is connected, hence, $\mC$ is connected.
\end{proof}

We note that  if $K$ is a finite set of finite reflexive connected relational structures, then by item (3) of the lemma, $K$ satisfies the conditions of items (4) and (5).

\section{Free relational structures and clone homomorphisms}

We require some lemmas in connection with clone homomorphisms. For our characterizations of Hobby-McKenzie varieties in Section 5, we need to construct a particular relational structure on the underlying set of an appropriate algebra of a non-Hobby-McKenzie variety. Our construction is a general one and may  apply not only for non-Hobby-McKenzie varieties but for other varieties as well. In the following definitions and two lemmas, we delineate this general construction and describe some of its properties that we need later.

Recall that after Definition~\ref{freealg}, we gave two ways how to conceive of a free algebra $\alg F$. We shall use both approaches at the same time. So, we conceive of an element of $F$ as an $S$-ary term $f$, as well as, the related  $S$-ary term operation $f^{\alg F}$. Usually, the superscript is left off.

\begin{definition}\label{defofF} 
Let $\var V$ be a non-trivial variety, and let $\mS$ be any relational structure. Let $\alg F\in\var V$ denote the free algebra with free generating set $S$, and $\mF$ the relational structure on $F$ whose relations coincide with the subalgebras generated by the relations of $\mS$ in suitable powers of $\alg F$. We call $\mF$ the {\em free relational structure generated by $\mS$ in $\var V$}. Since the relations of $\mF$ are subalgebras of powers of $\alg F$, the basic operations of $\alg F$ preserve the relations of $\mF$, that is, $\mF$ is a compatible relational structure in $\alg F$, and hence in $\var V$. Let $U$ denote the set of unary term operations of $\alg F$, and for any $u\in U$ let $\mF_u$ be the substructure induced in $\mF$ by the elements $f$ for which $f(\bar x)=u(x)$ for all $x\in F$ where $\bar x$ denotes the constant $S$-tuple whose entries are all equal to $x$. Before proving our first lemma in this section, we give an example of a free relational structure generated by $\mS$ in $\var V$. 
\vege\end{definition}

\begin{example}\label{freerel} We assume now that $\var V$ is the variety of commutative semigroups satisfying the extra identity $x^2y=xy$, and $\mS$ is the reflexive 3-cycle, that is, $\mS$ is the digraph on $S=\{x,y,z\}$ with the single relation $$\{(x,x),(y,y),(z,z),(x,y),(y,z),(z,x)\}.$$ Notice that the two-element semilattice and two-element set with a single constant binary operation are members of  $\var V$. Let $\alg F$ be the free algebra with free generating set $\{x,y,z\}$ in $\var V$. Then by using the identities defining $\var V$ and the particular algebras just given in $\var V$, one can easily see that $\alg F$ has the 10-element base set $$F=\{x,y,z,x^2,y^2,z^2,xy,xz,yz,xyz\}.$$

The edges of the free relational structure $\mF$ generated by $\mS$ in $\var V$ are the elements of the binary relation generated by $$\{(x,x),(y,y),(z,z),(x,y),(y,z),(z,x)\}$$ in $\alg F^2$. Thus, they are of the form $(t(x,y,z,x,y,z),t(x,y,z,y,z,x))$ where $t$ is a 6-ary term operation of $\alg F$. These term operations are related to the 6-ary terms that are described similarly to the elements of $F$. Thus, up to equivalence, the  6-ary terms of $\alg F$ are  $x_1^2,\dots,x_6^2$ and the products with pairwise different  factors from $\{x_1,\dots,x_6\}$. By taking this into account, the edges of $\mF$ are easily obtained.

We also note that $\alg F$ has two unary term operations related to the unary terms $x$ and $x^2$. The first of these unary operations is the identity map $\id$ on $F$, and the second one is denoted by $r$ where $r$ maps $x,y,z$ to $x^2,y^2,z^2$ respectively, and fixes the other elements of $F$. So, $U=\{\id, r\}$. Hence, $F_{\id}=\{x,y,z\}$ and  
\[F_{r}=\{x^2,y^2,z^2,xy,xz,yz,xyz\}. \]\vglue-16truept\vege
\end{example}

The free relational structure $\mF$ in the example is displayed in Figure~\ref{fig:free}. Note that for clarity, we left off loops, and used undirected edges to replace back and forth edges in the figure. We note that the clone of $\var V_{\Id}$ consists of only projections, and hence, there is a clone homomorphism from it to $\pol(\mS)$. Observe that the components of $\mF$ are $\mF_{\id}$ and  $\mF_r$, and  $\mS$ is a retract of $\mF_{\id}$. The following lemma shows that a similar phenomenon holds in general.

\begin{figure}[H] 
\centering
\includegraphics[scale=1]{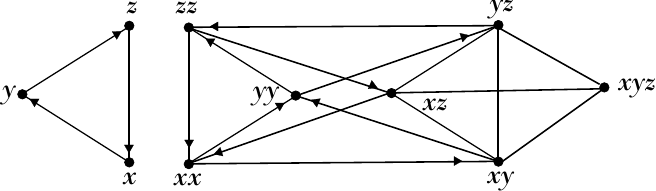}
\caption{A free relational structure $\mF$.} 
\label{fig:free}
\end{figure}


\begin{lemma}\label{retr} 
Let $\mS$ be a reflexive connected relational structure. Let $\var V$ be a non-trivial variety, $\alg F\in\var V$ the free algebra with free generating set $S$, and $\mF$  the free relational structure generated by $\mS$ in $\var V$. Let $U$ denote the set of unary term operations of $\alg F$, and for any $u\in U$, let $\mF_u$ be the substructure defined in Definition~\ref{defofF}. Then the following hold.
 
\begin{enumerate}
\item  The components of $\mF$ coincide with the substructures $\mF_u$, $u\in U$. For the identity operation $\id$ of $F$,  $\mF_{\id}$ is the component induced by the  idempotent $S$-ary term operations in  $\mF$.
\item If there is a clone homomorphism from the clone of $\var V_{\Id}$ to $\pol(\mS)$, then  $\mS$ is a retract of $\mF_{\id}$ with the coretraction $\iota \colon S\to  F_{\id},\ a\mapsto a.$
\end{enumerate}
\end{lemma} 
\begin{proof} 
First we prove (1).  Let $f\in F_u$. As $f$ is an $S$-ary term operation, there is a finite set $P\subseteq S$ and a $P$-ary term operation $f'$ such that $f(\mS^S)=f'(\mS^P)$. Then by item (3) of Lemma~\ref{connected}, $\mS^{P}$ is connected. Since $f'$ preserves connectedness, $f'(\mS^{P})=f(\mS^S)$ is also connected. Since $f\in F_u$, for any $x\in S$
$$u(x)=f(\bar x)\in f(S^S).$$ Clearly, we also have that $f\in f(S^{S}).$ So, by taking some $x\in S$, $u(x)$ is connected to any $f\in F_u$ via a  path in $\mF$. Hence,  $\mF_u$ is connected.

We prove that if $f$ and $g$ are in the same component of $\mF$, then $f(\bar x)=g(\bar x)$ for all $x\in F$. It suffices to prove that for any two entries $f$ and $g$ of any tuple in any relation of $\mF$, $f(\bar x)=g(\bar x)$. By the definition of the relations of $\mF$, there is a (say $n$-ary) term operation $h$ of $\alg F$ such that $h(a)=f$ and $h(b)=g$ for some $n$-tuples $a,b\in S^n$. These equalities yield two identities that hold in $\var V$. By identifying the variables in these identities, we obtain that $f(\bar x)=h(\hat x)=g(\bar x)$ where $\hat x$ denotes the constant $n$-tuple whose entries are all equal to $x\in F$.

To finish the proof of item (1), observe that $\mF_{\id}$ is the component induced by the  $S$-ary  term operations $f^{\alg F}$ of $\alg F$ such that $f$ is an $S$-ary term and $f^{\alg F}(\bar x)=x$ holds in $\alg F$. Thus, $\var V$ satisfies the identity $f(\bar x)=x$.
This means that  $f^{\alg F}\in \mF_{\id}$ if and only if $f$ is an idempotent term. 

Now we prove item (2). Since $S$ generates $\alg F$, each element of $\mF_{\id}$ is of the form $t^{\alg F}(s_1,\dots,s_n)$ where $n$ is a non-negative integer, $t$ is an $n$-ary term of $\var V$, and  $s_1,\dots,s_n$ are some elements in $S$.  Observe that $t$ is idempotent by the second claim of item (1). We also may assume that $s_1,\dots,s_n$ are pairwise different. 

Let $\alg E=\alg F_{\var V_{\Id}}(\omega)$, and let $\mathscr C$ denote the clone of $\var V_{\Id}$, that is, $\mathscr C=\clo(\alg E)$. Let $^*$ be a clone homomorphism from $\mathscr C$ to $\pol(\mS)$. 
We claim that $$\mu\colon F_{\id}\to S,\ t^{\alg F}(s_1,\dots,s_n)\mapsto (t^{\alg E} )^*(s_1,\dots,s_n),$$ where $n$ is a non-negative integer, $t$ is an $n$-ary idempotent term  of $\var V$ and $s_1,\dots,s_n$ are pairwise different elements in $S$,  is a well-defined homomorphism. Suppose that $v^{\alg F}(s_1,\dots,s_n)= w^{\alg F}(s_1,\dots,s_n)$ where $v$ and $w$ are two $n$-ary idempotent terms of $\var V$ and $s_1,\dots,s_n$ are pairwise different elements in $S$. Then the identity $v= w$ holds in $\var V$. Since $v$ and $w$ are idempotent terms, $v= w$ holds in $\var V_{\Id}$ as well. Hence, $v^{\alg E}~=~w^{\alg E}.$ Therefore, $(v^{\alg E})^*=(w^{\alg E})^*$, and so, $(v^{\alg E})^*(s_1,\dots,s_n)= (w^{\alg E})^*(s_1,\dots,s_n)$ holds in $\mS$. Thus, $\mu$ is a well-defined map.

Let $R$ be any $k$-ary relation of $\mS$, let $R'$ be the corresponding relation of $\mF$, and let $R''=R'|_{F_{\id}}$.
We show that $\mu$ preserves the relation $R''$. We take a typical $k$-tuple in $R''$ as $v^{\alg F}(r_1,\dots,r_m)$ where $r_1,\dots,r_m\in R$ and $v$ is an $m$-ary term of $\var V$. 

While the term $v$ is not {\em a priori} idempotent, we still argue that it is. Let $v^{\alg F}(s_{1},\dots,s_{m})$ be the first entry of the $k$-tuple $v^{\alg F}(r_1,\dots,r_m)$ where $s_{1},\dots,s_{m}\in S$ are the first entries of the tuples $r_1,\dots,r_m$ respectively. Since $v^{\alg F}(s_{1},\dots,s_{m})\in F_{\id}$,  it equals $t^{\alg F}(s'_1,\dots,s'_n)$ for some $n$-ary idempotent term $t$ of $\var V$ and $s'_{1},\dots,s'_{n}\in S$. Then $\var V$ satisfies the identity $v(x_{1},\dots,x_{m})=t(y_1,\dots,y_n)$ where $x_i=y_j$ if and only if $s_i=s'_j$. Therefore, $\var V$ satisfies the identity $v(x,\dots,x)=t(x,\dots,x)$. By the idempotency of $t$,
$\var V$ also satisfies the identity $t(x,\dots,x)=x$. Thus, $\var V$ satisfies $v(x,\dots,x)=x$, that is, $v$ is idempotent.

Then, the image of the $k$-tuple $v^{\alg F}(r_1,\dots,r_m)$ under $\mu$ is $(v^{\alg E})^*(r_1,\dots,r_m)$. As $(t^{\alg E})^*\in\pol (\mS)$ and $r_1,\dots,r_m\in R$, $(t^{\alg E})^*(r_1,\dots,r_m)\in R$. So, $\mu$ preserves $R''$, and hence, $\mu$ is a homomorphism. Since $\iota$ is a homomorphism and $\mu\iota$ is the identity map on $S$, we have that $\mu$ is a retraction,  $\iota$ is a coretraction and $\mS$ is a retract of $\mF_{\id}$.
\end{proof}

The preceding lemma, for a non-trivial variety $\var V$ and a reflexive relational structure $\mS$, yields some useful properties of a special compatible relational structure in $\var V$. Our next lemma gives similar properties for, in some sense,  an even nicer compatible  relational structure in $\var V$. 

\begin{definition}\label{defofK}
Let  $\mF$ and $\mS$ be relational structures of the same signature, and let  $\mF_u$, $u\in U$, be the components of $\mF$. For any $u\in U$, let $H_u$ denote the set of non-constant homomorphisms from $\mF_u$ to $\mS$. For any $u\in U$ and $t\in F_u$, let $[t]$ be the map $H_u\rightarrow S, \varphi\mapsto \varphi(t)$. If $H_u=\emptyset$, then $[t]$ is just the empty map. Let $$\psi\colon F\to \dot\bigcup_{u\in U}S^{H_u}, \ t\mapsto [t].$$
We remark that $\psi$ is a homomorphism from $\mF$ to $\dot\bigcup_{u\in U}\mS^{H_u}$ where $\mS^{H_u}$ is the one-element reflexive structure if $H_u=\emptyset$. Indeed, $\psi$ is the union of the maps $\psi|_{F_u}\colon F_u\to S^{H_u},\ t\mapsto 
[t]$,   $ {u\in U}$, and these maps are homomorphisms from $\mF_u$ to $\mS^{H_u}$ as their coordinate maps $\varphi:\mF_u\to \mS,\ t\mapsto \varphi(t)$, $\varphi\in H_u$, are ones. Let $$\mK\coloneqq\psi(\mF)\subseteq \dot\bigcup_{u\in U}\mS^{H_u},$$ and for any $u\in U$, $\mK_u\coloneqq\psi(\mF_u)\subseteq\mS^{H_u}$. The following example is to demonstrate the new notions defined in this paragraph for the relational structure $\mF$ constructed in Example~\ref{freerel}.
\vege\end{definition}

\begin{example} In Example~\ref{freerel}, we established that the set of unary term operations of $\alg F$ is $U=\{\id,r\}$. Since $\mF_{\id}=\mS$, $H_{\id}=\{\id_{F_{\id}},\varphi, \varphi^2\}$ where $\varphi$ is the permutation of $\{x,y,z\}$ that maps $x$ to $y$ and $y$ to $z$. On the other hand, every homomorphism from $\mF_{r}$ to $\mS$ is constant, so $H_r=\emptyset$. Let $c$ denote the only element of $S^{H_r}$. Then the above defined map $\psi$ is as follows
$$\psi\colon F\to S^{\{\id_{F_{\id}},\varphi, \varphi^2\}} \cup \{c\}, \ t\mapsto [t]$$ where $[t]=(t, \varphi(t),\varphi^2(t))$ if $t\in F_{\id}$, and $[t]= c$ if $t\in F_r$. Then $$K=\{(x,y,z), (y,z,x), (z,x,y), c\},\ K_{\id}=\{(x,y,z), (y,z,x), (z,x,y)\},\ K_{r}=\{c\},$$ and the edges of $\mK$ are all loop edges in $K$ and 
\[(x,y,z)\to (y,z,x)\to (z,x,y)\to (x,y,z).\]
The resulting relational structure $\mK$ is depicted in Figure~\ref{fig:K} where loop edges are not displayed.
\vege
\end{example}

\begin{figure}[H] 
\centering
\includegraphics[scale=1]{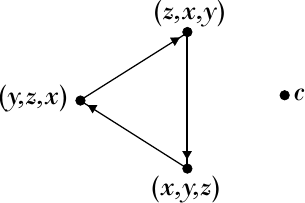}
\caption{The relational structure $\mK$.} 
\label{fig:K}
\end{figure}

In the following lemma, we establish some properties of the  map $\psi$ and relational structure $\mK$ defined in Definition~\ref{defofK} if they come from a free relational structure $\mF$ generated by $\mS$ in a non-trivial variety $\var V$.

\begin{lemma}\label{retract2}
Let $\var V$ be a non-trivial variety, and $\mS$ a reflexive connected relational structure. Let $\alg F\in\var V$ denote the free algebra with free generating set $S$, and $\mF$  the free relational structure generated by $\mS$ in $\var V$.  Let $U$ be the set of unary term operations of $\alg F$, and $\mF_u$, $u\in U$, the components of $\mF$. Let $\mK$, $\psi$, and for any $u\in U$, $H_u$ and $\mK_u$  be as defined in Definition~\ref{defofK}.  Then the following hold.
\begin{enumerate}
    
    \item $\mK$ and $\mK_u$ where $u\in U$ are reflexive relational structures.
    \item The components of $\mK$ coincide with the substructures $\mK_u$ of $\mK$ where $u\in U$.
     \item If there is a clone homomorphism from the clone of $\var V_{\Id}$ to $\pol(\mS)$, then
		$\mS$ is a retract of $\mK_{\id}$ with the coretraction $\iota'\colon S\to \{[a]\colon a\in S\},\ a\mapsto [a]$.
    \item For any $u\in U$, every homomorphism $q\colon \mK_u\to \mS$ is a constant or a projection to a unique coordinate $\varphi\in H_u$.
    \item The kernel $\ker(\psi)$ of $\psi$ is a congruence of $\alg F$.
    \item Let $\alg K$ be the algebra in $\var V$ whose underlying set is $K$ and whose basic operations are defined by 
    $$t^{\alg K}([x_1],\dots,[x_n])\coloneqq [t^{\alg F}(x_1,\dots,x_n)]$$ 
    for all basic operations $t^{\alg F}$ of $\alg F$ and $x_1,\dots,x_n\in F$ provided $t^{\alg F}$ is $n$-ary. Then 
    the operations of  $\alg K$ are well-defined, $\alg K$ is a homomorphic image of $\alg F$ under $\psi$, and $\mK$ is a compatible relational structure in $\alg K$.
\end{enumerate}
\end{lemma}
\begin{proof}
  Item (1) is straightforward, for $\mK=\psi(\mF)$, $\mF$ is reflexive, and $\psi$ preserves reflexivity for its image. Item (2) is also not hard. As $\psi$ is a homomorphism, it preserves connectivity. So, the substructures $\mK_u=\psi(\mF_u)$, where $u\in U$, are connected. Since $\psi(\mF_u)\subseteq \mS^{H_u}$ for each $u\in U$ and the copies $\mS^{H_u}$, $u\in U$, are disjoint in $\dot\bigcup_{u\in U}\mS^{H_u}$, the $\mK_u$ coincide with the components of $\mK$.

To prove item  (3), we define the map $\mu'\colon K_{\id}\to S,\ [t]\mapsto \mu(t)$ where $\mu\colon \mF_{\id}\to \mS$ is the retraction defined in the proof of item (2) of the preceding lemma. Notice that $\ker( \psi|_{F_{\id}}) \subseteq\ker( \mu)$, hence, $\mu'$ is well-defined. Now, it is clear that $\mu'$ is a homomorphism. Since $\iota'=\psi\iota$, where  $\iota$ is the coretraction in the statement of item (2) of the preceding lemma, is a homomorphism and $\mu'\iota'$ is the identity map on $S$, we have that $\mu'$ is a retraction, $\iota'$ is a coretraction and $\mS$ is a retract of $\mK_{\id}$.

To prove item (4), let $\varphi=q \psi|_{F_{u}}$. Suppose that $q$ is not constant. By using that $\psi(F_{u})=K_{u}$, $\varphi$ is not constant, and hence, $\varphi\in H_{u}$. Then for any element $t\in F_{u}$ $$q([t])=q(\psi(t))=\varphi(t)=[t](\varphi).$$ The uniqueness of $\varphi$ now is obvious by the equalities $\varphi(t)= q([t])$ for all $t\in F_u$.

Now, we prove item (5).  It is enough to show that $\ker(\psi)$ is preserved by every unary polynomial operation of $\alg F$, cf. e.g., Theorem 4.18 in~\cite{MMT}. Every unary polynomial operation of $\alg F$ is of the form $t(x,t_2,\dots,t_n)$ where $t_2,\dots,t_n\in F$ and $t$ is an $n$-ary term operation of $\alg F$. Observe that by the reflexivity of $\mF$, every unary polynomial operation of $\alg F$ is an endomorphism of $\mF$. Now, let us assume that $(v,w)\in \ker(\psi)$. We prove that $(t(v,t_2,\dots,t_n),t(w,t_2,\dots,t_n))$ is also in $\ker(\psi)$.

By the definition of $\psi$, there is a $u\in U$ such that $v,w\in F_u$. Since $\mF$ is reflexive and $t\in\pol(\mF)$, by item (2) of Lemma~\ref{connected}, $t$ preserves the connectivity of $\mF$. Hence, there exists $u'\in U$ such that $t(v,t_2,\dots,t_n),t(w,t_2,\dots,t_n)\in F_{u'}$.

In the case of $H_{u'}=\emptyset$, this yields immediately that $$(t(v,t_2,\dots,t_n),t(w,t_2,\dots,t_n))\in  \ker(\psi).$$ Now, suppose that $H_{u'}$ is non-empty, and let $\varphi\in H_{u'}$ be arbitrary. Then the map $$\nu\colon F_u\mapsto \mS, x\mapsto \varphi(t(x,t_2,\dots,t_n))$$ is a homomorphism from $\mF_u$ to $\mS$. Since $(v,w)\in \ker(\psi)$, this implies in particular that $\nu(v)=\nu(w)$. Therefore, $\varphi(t(v,t_2,\dots,t_n))=\varphi(t(w,t_2,\dots,t_n))$. Since this holds for all $\varphi\in H_{u'}$, we obtain that  $(t(v,t_2,\dots,t_n),t(w,t_2,\dots,t_n))\in  \ker(\psi)$.

Now, it is not hard to see that item (6) holds. Indeed, item (5) immediately yields that $\alg K$ is well-defined, and hence, $\alg K$ is a homomorphic image of $\alg F$ under $\psi$. The latter and $\mK=\psi(\mF)$ easily imply that $\mK$ is a compatible structure of the algebra  $\alg K=\psi(\alg F)$ as well.  
\end{proof}

In Section 5, we also use the following lemma on clone homomorphisms. 
\begin{lemma}[see Lemma 4.1 in~\cite{BGMZ}]\label{clonehom} Let $\mG$ be a relational structure and $\mK\subseteq\mG$. Let $\mathscr C$ be a subclone of $\pol(\mG)$. If for any $f\in \pol(\mK)$, there is unique extension $f^*\in \mathscr C$, then the map $$\varphi\colon \pol(\mK)\to \mathscr  C,\ f\mapsto f^*$$ is a clone homomorphism.
\end{lemma}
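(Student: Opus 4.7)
The plan is to verify directly the two defining properties of a clone homomorphism for $\varphi$: preservation of projections and compatibility with composition. In both cases the strategy is identical: exhibit a natural candidate element of $C$ that manifestly restricts to the prescribed function on $\mK$, and then invoke the uniqueness hypothesis to conclude that this candidate equals $\varphi$ applied to that function.

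For projections, the key observation is that every projection $\pi_i^n$ already belongs to $C$, since $C$ is a subclone of $\pol(\mG)$ and in particular contains all projections on $G$. Restricted to $K^n$, the map $\pi_i^n$ is of course the corresponding projection on $K$, which lies in $\pol(\mK)$. Thus $\pi_i^n \in C$ is an extension to $C$ of the $i$-th $n$-ary projection on $\mK$, and by the uniqueness hypothesis $\varphi(\pi_i^n|_{K^n}) = \pi_i^n$, as required.

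For composition, fix $f \in \pol(\mK)$ of arity $n$ and $g_1,\dots,g_n \in \pol(\mK)$ of arity $m$. I would take $f^*(g_1^*,\dots,g_n^*)$ as the candidate; it lies in $C$ because $C$ is a clone. It remains to check that it restricts to $f(g_1,\dots,g_n)$ on $K^m$. The one point requiring care is that for any $k_1,\dots,k_m \in K$, each value $g_i^*(k_1,\dots,k_m) = g_i(k_1,\dots,k_m)$ lies in $K$, since $g_i$ is a polymorphism of $\mK$ and hence $g_i(K^m) \subseteq K$. Consequently the outer application of $f^*$ is evaluated on a tuple from $K^n$, where $f^*$ agrees with $f$, giving $f(g_1(k_1,\dots,k_m),\dots,g_n(k_1,\dots,k_m))$. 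Since $f(g_1,\dots,g_n) \in \pol(\mK)$, the uniqueness hypothesis then forces $(f(g_1,\dots,g_n))^* = f^*(g_1^*,\dots,g_n^*)$, i.e.\ $\varphi$ commutes with composition.

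I do not anticipate any real obstacle: the lemma is a bookkeeping exercise whose whole content is the uniqueness of extension. The only thing one has to remember is the standing fact that $\pol(\mK)$ consists of homomorphisms $\mK^m \to \mK$ (so $K$ is closed under them), which is what allows the candidate $f^*(g_1^*,\dots,g_n^*)$ to restrict correctly to $\mK$.
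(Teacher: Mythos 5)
Your proof is correct and is the standard argument (the paper itself defers to Lemma 4.1 of \cite{BGMZ} rather than reproving it): exhibit the obvious candidate in $C$ --- the projection itself, resp.\ $f^*(g_1^*,\dots,g_n^*)$ --- check it extends the given polymorphism of $\mK$, and invoke uniqueness. You correctly isolate the one point needing care, namely that $g_i(K^m)\subseteq K$ so the outer $f^*$ is evaluated on tuples from $K^n$ where it agrees with $f$.
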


\section{Partial semilattices}

In the rest of this paper, we mainly consider relational structures which are ternary and have a single relation. In Section 5, we shall see that in our characterizations of Hobby-McKenzie varieties, certain ternary structures called  partial semilattices play a crucial role. A ternary structure $\mG$ is a {\em partial semilattice} if it has a single  reflexive ternary relation that consists of some  triples of the form $(a,b,a\wedge b )$, $a,b\in G$ where  $\wedge$ is a binary meet semilattice operation  with an underlying set containing $G$.

Let $\mG$ be a partial semilattice, and let $(a,b,c)$ be a triple of $\mG$. Thro\thanks{The author has been funded by the European Research Council (Project POCOCOP, ERC Synergy Grant 101071674). Views and opinions expressed are however those of the authors only and do not necessarily reflect those of the European Union or the European Research Council Executive Agency. Neither the European Union nor the granting authority can be held responsible for them.}ughout this paper, we denote the unique element $c$ by $a\sqcap b$. Let $a_i\in G$, $1\leq i\leq n$. We write $\bigsqcap\limits_{i=1}^n a_i$ for
$$((\dots(a_1\sqcap a_2)\dots )\sqcap a_{n-1})\sqcap a_n$$
if all of 
$$a_1\sqcap a_2, (a_1\sqcap a_2)\sqcap a_3,\dots, ((\dots(a_1\sqcap a_2)\dots)\sqcap a_{n-1})\sqcap a_n$$ 
exist in $\mG$.
It is not required by definition that a partial semilattice contain a triple $(a,b, c)$ for any  $a,b\in G$, but if it does, we call it a {\em full semilattice}. Notice that if $f$ is a homomorphism from $\mG$ to $\mH$ and $a\sqcap b$ exists in $\mG$, then $f(a)\sqcap f(b)$ also exists in $\mH$ and  $f(a\sqcap b)=f(a)\sqcap f(b)$.

Let $\mG$ be a partial semilattice. We call an element $1\in G$ a {\em largest element of $\mG$}  if for any $a\in G$, $(a,1, a)$ and $(1,a, a)$ are triples in $\mG$.  Clearly,  in any partial semilattice, the largest element is unique if there exists one.
It is also easy to see that the product $\mG\times\mH$ of two partial semilattices $\mG$ and $\mH$ is a partial semilattice, and if $1^{\mG}$ is the largest element of $\mG$ and  $1^{\mH}$ is the largest element of $\mH$, then $(1^{\mG},1^{\mH})$ is the largest element of $\mG\times\mH$. We require the following lemma on partial semilattices.

\begin{lemma}\label{prod_of_partial_sl}
Let $\mH_i$, $1\leq i\leq n$, be partial semilattices with largest elements $1_1,\dots, 1_n$, respectively. Let $\mH$ be a partial semilattice, and $f$ a homomorphism from  $\prod\limits_{i=1}^n\mH_i$ to $\mH$. Let $f_i$ be the unary map obtained from $f(x_1,\dots,x_n)$ by replacing each $x_j$ by $1_j$ for all $j\neq i$. Then for any $1\leq i\leq n$, $f_i$ is a homomorphism from $\mH_i$ to $\mH$. Moreover, $f(x_1,\dots,x_n)=\bigsqcap\limits_{i=1}^n f_i(x_i)$.
 \end{lemma}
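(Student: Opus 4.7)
The plan is to exploit two facts about products of partial semilattices. First, a triple $(\bar a,\bar b,\bar c)$ lies in the ternary relation of $\prod_{i=1}^n \mH_i$ if and only if $(a_j,b_j,c_j)$ is a triple of $\mH_j$ for every $j$; equivalently, $\bar a\sqcap\bar b$ exists in the product precisely when every coordinate meet $a_j\sqcap b_j$ exists, in which case its $j$-th entry is $a_j\sqcap b_j$. Second, the largest-element hypothesis yields $x\sqcap 1_j=1_j\sqcap x=x$ in each $\mH_j$, while reflexivity gives $1_j\sqcap 1_j=1_j$. Together, these observations reduce the entire lemma to coordinate-wise bookkeeping.

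For the first claim, I would take any triple $(a,b,c)$ in $\mH_i$ and form $\bar a\coloneqq(1_1,\dots,1_{i-1},a,1_{i+1},\dots,1_n)$ together with $\bar b,\bar c$ defined analogously with $b$, respectively $c$, in the $i$-th slot. Every coordinate $j\neq i$ reads $(1_j,1_j,1_j)$, a triple by reflexivity, and the $i$-th coordinate is the given triple $(a,b,c)$. Hence $(\bar a,\bar b,\bar c)$ is a triple of $\prod_j\mH_j$, and applying the homomorphism $f$ gives the triple $(f_i(a),f_i(b),f_i(c))$ of $\mH$; this is exactly the statement that $f_i\colon\mH_i\to\mH$ is a homomorphism.

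For the decomposition, set $\bar u_k\coloneqq(1_1,\dots,1_{k-1},x_k,1_{k+1},\dots,1_n)$, so that $f(\bar u_k)=f_k(x_k)$. A short induction on $k$ shows that $\bigsqcap_{i=1}^k \bar u_i$ exists in $\prod_j\mH_j$ and equals $(x_1,\dots,x_k,1_{k+1},\dots,1_n)$: at the inductive step, coordinate $j<k$ reduces to $x_j\sqcap 1_j=x_j$, coordinate $k$ to $1_k\sqcap x_k=x_k$, and coordinate $j>k$ to $1_j\sqcap 1_j=1_j$. Taking $k=n$ gives $\bigsqcap_{i=1}^n \bar u_i=(x_1,\dots,x_n)$, and since partial-semilattice homomorphisms preserve existing meets, applying $f$ stage by stage to this iterated meet shows that $\bigsqcap_{i=1}^n f_i(x_i)$ exists in $\mH$ and equals $f(x_1,\dots,x_n)$. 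No substantive obstacle arises; the only point requiring care is tracking the existence of every intermediate meet so that the left-associated expression $\bigsqcap_{i=1}^n f_i(x_i)$ is genuinely defined in $\mH$, which is precisely what the induction supplies.
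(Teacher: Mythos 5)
Your proof is correct and follows essentially the same route as the paper's: the homomorphism property of $f_i$ comes from padding with the largest elements (constant homomorphisms, legitimate by reflexivity), and the identity $f(x_1,\dots,x_n)=\bigsqcap_{i=1}^n f_i(x_i)$ comes from an induction resting on $x\sqcap 1=1\sqcap x=x$ together with the fact that homomorphisms preserve existing meets. The only cosmetic difference is that you assemble the iterated meet $\bigsqcap_{i=1}^n\bar u_i=(x_1,\dots,x_n)$ explicitly inside the product before applying $f$, whereas the paper runs the induction on the arity via the decomposition $\prod_{i=1}^n\mH_i\cong\bigl(\prod_{i=1}^{n-1}\mH_i\bigr)\times\mH_n$.
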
 
\begin{proof}
Notice  that by reflexivity, the constant self-maps of $\mH_i$, $1\leq i\leq n$, are homomorphisms.  The unary maps $f_i$ are homomorphisms as they are obtained from the homomorphism $f$ and the constant homomorphisms $1_j$, $j\neq i$, by composition.
We prove the second claim of the lemma by induction on $n$. When $n=1$, the claim is obvious. Let $n>1.$
By our earlier remark, $\mM=\prod\limits_{i=1}^{n-1}\mH_i$ is a partial semilattice with largest element $1=(1_1,\dots,1_{n-1})$. Moreover, $f(x_1,\dots,x_{n-1},1_n)$ is a homomorphism from  $\mM$ to $\mH$. So, $f(x_1,\dots,x_{n-1},1_n)=\bigsqcap\limits_{i=1}^{n-1} f_i(x_i)$ by the induction hypothesis. From now on, we naturally identify  $\prod\limits_{i=1}^n\mH_i$  with the relational structure $\mM\times \mH_n$. Let $x=(x_1,\dots,x_{n-1})\in M$ and $x_n\in H_n$. Then, by using that $1$ is the largest element of $\mM$ and $1_n$ is the largest element of $\mH_n$,  $(x,1,x)$ is a tuple of $\mM$ and $(1_n,x_n,x_n)$ is tuple in $\mH_n$. So, $((x,1_n), (1,x_n),(x,x_n))$  is a tuple of $\mM\times \mH_n$.
Thus, by using that $f$ is a homomorphism, 
$(f(x,1_n), f(1,x_n),f(x,x_n))$ is a tuple of $\mM\times \mH_n$, that is,
$f(x,x_n)=f(x,1_n)\sqcap f(1,x_n)$. So, by taking into account the equalities $f(x,1_n)=\bigsqcap\limits_{i=1}^{n-1} f_i(x_i)$ and $f(1,x_n)=f_n(x_n)$, we obtain that $f(x_1,\dots,x_n)=\bigsqcap\limits_{i=1}^n f_i(x_i)$.
\end{proof}

Let $\mS$ denote the 2-element full semilattice structure on $\{0,1\}$ with the single relation 
$\{(0,0,0),\ (0,1,0),\ (1,0,0),\ (1,1,1)\}.$ Notice that the relation of  $\mS$  is just the \lq\lq graph\rq\rq of the meet semilattice operation on $\{0,1\}$, and $1$ is the largest element of $\mS$.

We note that if the partial semilattice $\mH$ in the statement of the preceding lemma is a weak substructure of a full semilattice structure whose related semilattice operation is denoted by $\wedge$ (in a typical application of the lemma in this paper, $\mH$ will be a weak subpower of $\mS$), then we may replace $\sqcap$ by $\wedge$ in the statement of the lemma. Now, by the help of the preceding lemma, $\pol(\mS)$ for the above 2-element structure $\mS$ is easily described.

\begin{corollary} \label{cloneofs} Let $\mS$ be the relational structure on $\{0,1\}$ with the single relation 
$\{(0,0,0),\ (0,1,0),\ (1,0,0),\ (1,1,1)\}.$ Then the  clone $\pol(\mS)$ consists of all constant operations of $\{0,1\}$ and all the meet semilattice  operations of all arities on $\{0,1\}$. 
\end{corollary}

\section{Characterizations of Hobby-McKenzie varieties}

In this section, we prove the main result of the paper. The following characterization of Hobby-McKenzie varieties plays a crucial role in the proof of our main result. Throughout this section, $\mS$ denotes the relational structure on $\{0,1\}$ with the single relation $\{(0,0,0),\ (0,1,0),\ (1,0,0),\ (1,1,1)\}.$

\begin{lemma}\label{majdnem_jo}
Let $\var V$ be a variety. Then $\var V$ is a non-Hobby-McKenzie variety if and only if there exist a non-empty  set $U$ and sets $H_u,\ u\in U$, such that some of the $H_u$ are non-empty and  $\dot\bigcup_{u\in U}\mS^{H_u}$ is a compatible structure in $\var V$. If  $\var V$ is a locally finite non-Hobby-McKenzie variety, then $U$ and for all $u\in U,$  $H_u$ can be chosen to be finite.
\end{lemma}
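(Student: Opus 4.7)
The plan is to prove both directions of the equivalence, building on Theorem~\ref{THM} and the structural results of Section~2.

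For the ($\Leftarrow$) direction, assume $\dot\bigcup_{u\in U}\mS^{H_u}$ is compatible in $\var V$ with some $H_{u_0}\neq\emptyset$. Compatibility gives a clone homomorphism $\clo(\var V)\to\pol(\dot\bigcup_u\mS^{H_u})$, which restricts to $\clo(\var V_{\Id})\to\pol(\dot\bigcup_u\mS^{H_u})_{\Id}$ on idempotent terms. I would argue that every idempotent polymorphism preserves the components: since $\mS$ is finite, reflexive, and connected, both $\mS^{H_u}$ and its powers $(\mS^{H_u})^n$ are reflexive and connected (as noted in Section~2); idempotency sends the constant diagonal tuple into its home component, and connectedness of $(\mS^{H_u})^n$ then forces the image of any same-component tuple to remain in that component. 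Restricting to $\mS^{H_{u_0}}$ yields a clone homomorphism into $\pol(\mS^{H_{u_0}})_{\Id}$. Two successive applications of Lemma~\ref{prod_of_partial_sl} (first across the $n$ argument positions, then within $\mS^{H_{u_0}}=\prod_{\varphi\in H_{u_0}}\mS$), together with the idempotency constraints, show that every idempotent polymorphism of $\mS^{H_{u_0}}$ acts coordinatewise, with each coordinate an idempotent polymorphism of $\mS$. Projecting to any fixed $\varphi_0\in H_{u_0}$ is therefore a clone homomorphism $\pol(\mS^{H_{u_0}})_{\Id}\to\pol(\mS)_{\Id}=\clo(\mathcal{SL})$. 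Composing the chain gives $\var V_{\Id}\leq\mathcal{SL}$, and Theorem~\ref{THM} then yields that $\var V$ is non-Hobby-McKenzie.

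For the ($\Rightarrow$) direction, assume $\var V$ is non-Hobby-McKenzie. Theorem~\ref{THM} provides a clone homomorphism $\clo(\var V_{\Id})\to\pol(\mS)$, which makes $\mS$ eligible for Lemmas~\ref{retr} and~\ref{K}. Applying them yields the free algebra $\alg F\in\var V$ on $S$, the free relational structure $\mF$, the index set $U$ of unary term operations of $\alg F$, the sets $H_u$ of non-constant homomorphisms $\mF_u\to\mS$, the map $\psi$, the algebra $\alg K=\psi(\alg F)\in\var V$, and the compatible structure $\mK=\dot\bigcup_u\mK_u\subseteq\dot\bigcup_u\mS^{H_u}$ in $\alg K$; by Lemma~\ref{K}(3), $H_{\id}$ is non-empty. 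To upgrade from $\mK$ compatible in $\alg K$ to the full $\dot\bigcup_u\mS^{H_u}$ compatible in $\var V$, I would invoke Lemma~\ref{clonehom}: show that every polymorphism of $\mK$ admits a unique extension to a polymorphism of $\dot\bigcup_u\mS^{H_u}$, so that the resulting map $\pol(\mK)\to\pol(\dot\bigcup_u\mS^{H_u})$ is a clone homomorphism; composing with the canonical $\clo(\var V)\to\clo(\alg K)\subseteq\pol(\mK)$ then exhibits $\dot\bigcup_u\mS^{H_u}$ as compatible in $\var V$.

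The main obstacle is the unique-extension claim. The strategy leverages Lemma~\ref{K}(4) --- each non-constant homomorphism $\mK_u\to\mS$ is the coordinate projection at a unique $\varphi\in H_u$ --- together with Lemma~\ref{prod_of_partial_sl} applied inside the ambient full-semilattice products $\prod_i\mS^{H_{u_i}}$, which do possess largest elements unlike the $\mK_{u_i}$ themselves. Concretely, for an $n$-ary polymorphism $f$ of $\mK$, the restriction $\prod_i\mK_{u_i}\to\mK_v$ (with output component $v$ determined by the algebra structure induced on $U$), post-composed with each coordinate projection $\mK_v\to\mS$ indexed by $\varphi\in H_v$, is forced into a meet of coordinate projections $\bigwedge_{(i,\varphi')\in J_\varphi}y_i(\varphi')$ for a well-defined index set $J_\varphi$; this closed-form meet extends canonically to $\prod_i\mS^{H_{u_i}}\to\mS$, and assembling across all $(u_1,\dots,u_n)\in U^n$ produces the desired polymorphism of the disjoint union. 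Uniqueness of extension follows from the non-constancy of each $\varphi'\in H_{u_i}$ supplying enough separating elements in $\mK_{u_i}$. Finally, the locally finite refinement follows by taking $S$ finite, so that $\alg F$, each $\mF_u$, each $H_u$, and hence $\dot\bigcup_u\mS^{H_u}$ are all finite.
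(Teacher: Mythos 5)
Your overall architecture for the forward direction matches the paper's (build $\mF$, pass to $\mK=\psi(\mF)$ via Lemmas~\ref{retr} and~\ref{K}, then extend polymorphisms and invoke Lemma~\ref{clonehom}), but two of the load-bearing steps are not right as stated. First, you assert that the components $\mK_{u_i}$ lack largest elements and that Lemma~\ref{prod_of_partial_sl} should therefore be applied ``inside the ambient products $\prod_i\mS^{H_{u_i}}$.'' This is backwards: a polymorphism of $\mK$ is only defined on $\prod_i\mK_{u_i}$, so applying the lemma on the ambient product presupposes the very extension you are trying to construct. The correct move (the paper's first step) is to show that each $\mK_u$ \emph{does} have a largest element, namely $[u(y)]$, which follows because $(y,x,x)$ and $(y,y,y)$ generate triples $(t(y,y),t(x,y),t(x,y))$ and $(t(x,y),t(y,y),t(x,y))$ in $\mF$ for every $t\in F_u$. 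Only with this in hand can Lemma~\ref{prod_of_partial_sl} be applied directly to $f_s\colon\prod_\ell\mK_{u_\ell}\to\mS$ to obtain the closed form $\bigwedge_j y_{\ell_j}(\varphi_j)$. Second, you claim a \emph{unique} extension to a polymorphism of $\dot\bigcup_u\mS^{H_u}$ and justify uniqueness only by ``enough separating elements.'' Uniqueness within all of $\pol(\dot\bigcup_u\mS^{H_u})$ is doubtful (a coordinate map that is constant or a projection on $\mK_u$ can in general extend to several filter-indicators on $\mS^{H_u}$), which is precisely why the paper defines a restricted subclone $C$ of meet-of-projection/constant polymorphisms and proves uniqueness of the extension \emph{within $C$}, using the substructure $\mP=\prod_\ell u_\ell(\mS_{\mK})\cong$ a power of $\mS$ (consisting of the all-$0$ and all-$1$ tuples in each factor) to pin down the dependence set and then the uniqueness clause of Lemma~\ref{K}(4) to pin down the coordinates $\varphi_j$. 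This uniqueness argument is the technical heart of the proof and is missing from your outline.

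Your converse direction genuinely differs from the paper's, which simply observes that $\mS$ is a retract of $\dot\bigcup_u\mS^{H_u}$ (project a nontrivial component to a coordinate, send the other components to constants) and that Hobby-McKenzie polymorphisms pass to retracts while $\mS$ has none. Your route through component-preservation of idempotent polymorphisms and a coordinatewise description of $\pol(\mS^{H_{u_0}})_{\Id}$ can be made to work, but as written it applies Lemma~\ref{prod_of_partial_sl} to the possibly infinite product $\mS^{H_{u_0}}=\prod_{\varphi\in H_{u_0}}\mS$, whereas the lemma is stated only for finite products; for infinite $H_{u_0}$ you would need the extra observation that idempotency forces the filter $f_\varphi^{-1}(1)$ to contain the finitely supported element $\bigwedge_i y_i(\varphi)$ and hence to be principal below it. The retract argument avoids all of this.
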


\begin{proof} 
First we are going to prove the ``if'' part of the first claim of the lemma. We assume that $\dot\bigcup_{u\in U}\mS^{H_u}$ is a compatible relational structure in $\var V$ and for some $u\in U$, $H_u\neq\emptyset$. Then $\mS$ is a retract of $\dot\bigcup_{u\in U}\mS^{H_u}$. It is easy to see that linear identities are preserved for retract. So, the existence of a Hobby-McKenzie polymorphism is inherited under taking retract.  Thus, by using that $\mS$ admits no Hobby-McKenzie operation, $\var V$ is a non-Hobby-McKenzie variety.

Now we prove the ``only if'' part of the first claim. Our goal is to construct a compatible relational structure  of the required form in a non-Hobby-McKenzie variety $\var V$. Before proceeding, we delineate the  main steps of our proof.
First, we use Lemmas~\ref{retr} and~\ref{retract2} to construct
a specific compatible structure $\mK$ in $\var V$. It turns out that the components of $\mK$ naturally embed into powers of $\mS$. So $\mK$ itself naturally embeds into a structure $\mG$ that is a disjoint union of $\mS$-powers. Thus, $\mG$ has the form required by the lemma.
It remains  to verify that $\mG$ is a compatible relational structure in $\var V$.  In order to do this, we invoke Lemma~\ref{clonehom} to prove that there is a clone homomorphism from $\pol(\mK)$ to a subclone of $\mG$. To meet the requirements of the lemma, we define a subclone $\mathscr C$ of $\pol(\mG)$ and we prove that every polymorphism of $\mK$ extends uniquely to an operation of $\mathscr C$ (see Claim 4 below).  In order to verify unique extendability, we  need some information on the shape of the components of $\mK$ (see Claims 1 and 2) and the polymorphisms of $\mK$ (see Claim 3). Finally, we argue at the end of the proof that if there is a clone homomorphism from $\pol(\mK)$ to $\mathscr C$, then $\mG$ is a compatible structure in $\var V$.

So, let us assume that $\var V$ is a non-Hobby-McKenzie variety. By Theorem~\ref{THM}, this is equivalent to the fact that the full idempotent reduct $\var V_{\Id}$ of $\var V$ interprets in $\mathcal{S}\mathcal{L}$.  Let  $\alg F$ be the free algebra freely generated by two elements $x$ and $y$ in $\var V$. As $\alg F$ is the free algebra with a two-element free generating set, we conceive of the underlying set $F$ of $\alg F$ as a set of designated binary terms, as well as, the set of binary term operations of $\alg F$, see the comment after Definition~\ref{freealg}.

We define a compatible reflexive ternary structure $\mF$ in $\alg F$ by letting the only  relation of $\mF$ be the subalgebra of $\alg F^3$ generated by the set of triples $$\{(x,x,x),\ (x,y,x), \ (y,x,x), \ (y,y,y)\}.$$  By definition, for any $t,v,w\in F$, $(t,v,w)\in\mF$ if and only if there is a 4-ary term $h$ of $\var V$ such that
$$t=h(x,x,y,y),\ v=h(x,y,x,y)\text{ and  }w=h(x,x,x,y)\text{ in } \alg F.$$ By Definition~\ref{defofF}, $\mF$ basically is  the free relational structure generated by  $\mS$ in $\var V$. The only difference is that the free generators of $\alg F$ are now called $x$ and $y$ instead of $0$ and $1$, respectively. To make it precise, $\mF$  is  the free relational structure generated by 
$$\mS_\mF\coloneqq(\{x,y\};\ \{(x,x,x),\ (x,y,x), \ (y,x,x), \ (y,y,y)\})\text{ in }\var V.$$  Clearly,  $\mS_\mF$ is isomorphic to $\mS$.

Let $U$ be the set of unary term operations of $\alg F$. For any $u\in U$, we write $F_u$ for the set of the binary term operations $t\in F$ such that $t(x,x)=u(x)$. For any $u\in U$, let $\mF_u$ be the substructure of $\mF$ induced by $F_u$.

Now we check that the premises of items (1) and (2) of Lemma~\ref{retr} for $\var V$ and $\mS_\mF$ are satisfied. The variety $\var V$ is a non-trivial variety, since $\var V$ is non-Hobby-McKenzie. Also, there is a clone homomorphism from the clone of $\var V_{\Id}$ to $\pol(\mS_\mF)$.  Indeed, $\var V_{\Id}$ interprets in $\mathcal{S}\mathcal{L}$, $\mS_\mF\cong \mS$, and by Corollary~\ref{cloneofs}, $\pol(\mS)$  consists of  the meet semilattice operations on the set $\{0,1\}$.

Therefore, by item (1) of Lemma~\ref{retr}, the $\mF_u$ are exactly the connected components of $\mF$, and  for the identity map $\id\in U$, the base set of $\mF_{\id}$ consists of  the idempotent term operations in $F$. Moreover, by item (2), $\mS_\mF$ and, hence, $\mS$ is a retract of $\mF_{\id}$.

For any $u\in U$, let $H_u$ denote the set of non-constant homomorphisms from $\mF_u$ to $\mS$.  For any $t\in F_u$, we write $[t]$ for the map $H_u\rightarrow S, \varphi\mapsto \varphi(t)$. One may consider $[t]$ as an $H_u$-ary tuple $(\varphi(t))_{\varphi\in H_u}.$
Now, by the remark in Definition~\ref{defofK}, the map $$\psi\colon \mF\to \dot\bigcup_{u\in U}\mS^{H_u}, \ t\mapsto [t] $$ is a homomorphism. Let $\mK\coloneqq\psi(\mF)$. In other words, $\mK$ is the ternary structure with base set $\{[t]\colon\ t\in F\}$  whose relation consists of the triples $([t],[v],[w])$ where $(t,v,w)$ is a triple in $\mF$. Let $\mK_u\coloneqq \psi(\mF_u)\subseteq \mS^{H_u}$ where $u\in U$. 
 
Now, we apply Lemma~\ref{retract2}. By item (1), $\mK $ and the $\mK_u$, $u\in U$, are all reflexive structures.
By item (2), the components of $\mK$ coincide with the relational structures
$\mK_u$ where $u\in U$. Clearly, $\mK=\dot\bigcup_{u\in U}\mK_u\subseteq \dot\bigcup_{u\in U}\mS^{H_u}$. 
By item (3), $\mS$ is a retract of $\mK_{\id}$  with the coretraction $0\mapsto [x]$ and $1\mapsto [y]$. So, the substructure induced by  the set $\{[x],[y]\}$ in $\mK_{\id}$ is isomorphic to $\mS$. We write $\mS^{\mK}$ for this substructure.

We define an algebra $\alg K$ in $\var V$ whose underlying set is $K$ and whose basic operations are defined by $$t^{\alg K}([x_1],\dots,[x_n])\coloneqq [t^{\alg F}(x_1,\dots,x_n)]$$ for all basic operations $t^{\alg F}$ of $\alg F$ and $x_1,\dots,x_n\in F$ provided $t^{\alg F}$ is $n$-ary. By items (5) and (6) of Lemma~\ref{retract2}, we know that $\ker(\psi)$ is a congruence of $\alg F$, $\alg K$ is well-defined and  is a homomorphic image of $\alg F$ under $\psi$. We also have that $\mK$ is a compatible structure of $\alg K$. So, $\mK$ is compatible in $\var V$. We establish  some additional properties of $\mK$ through the next two claims.

 \begin{cla1} For any $u\in U$, $\mK_u$ is a partial semilattice with the largest element $[u(y)]$. \end{cla1}
 \begin{claimproof} By $\mK_u\subseteq \mS^{H_u}$, the structures $\mK_u$ are partial semilattices for all $u\in U$. We prove that $[u(y)]$ is the largest element of $\mK_u$. We know that $(y,x,x)$ and $(y,y,y)$ are  triples of $\mF$. As any $t\in F_u$ preserves the relation of $\mF$, $(t(y,y), t(x,y), t(x,y))$ is a triple of $\mF$. We similarly get that $(t(x,y), t(y,y), t(x,y))$ is also a triple of $\mF$. The map $\psi$ also preserves the relation, hence, both $$([t(y,y)], [t(x,y)], [t(x,y)])\text{ and }([t(x,y)], [t(y,y)], [t(x,y)])$$ are triples in $\mK_u$. So, $[u(y)]=[t(y,y)]$ is the largest element of $\mK_u$. \end{claimproof}

\begin{cla2} For any $u\in U$, $|K_u|=1$ if and only if $[u(x)]=[u(y)]$. If $|K_u|\geq 2$, then the substructure induced by $\{[u(x)], [u(y)] \}$ in $\mK_u$ is isomorphic to $\mS$. \end{cla2}
\begin{claimproof} Suppose that $[u(x)]=[u(y)]$, and let $t\in K_u$ be arbitrary.

By using the facts that $$t_1=([t(y,y)], [t(x,y)], [t(x,y)])\text{ and } t_2=([t(x,x)], [t(x,y)], [t(x,x)])$$ are triples of $\mK_u$, $$[t(x,x)]=[u(x)]=[u(y)]=[t(y,y)],$$ and that $\mK_u$ is a partial semilattice,  the first two entries of $t_1$ and $t_2$ are equal and so, the third ones are also equal, that is, $$[t(x,y)]=[t(x,x)]=[u(x)].$$ Thus, $|K_u|=1$.

If $|K_u|\geq 2$, then by $u$ being an endomorphism of $\mK$, the  2-element substructure induced by $\{[u(x)], [u(y)] \}$ in $\mK_u$ contains   $u(\mS^{\mK})$. By using that $\mK_u$ is a partial semilattice,  $u(\mS^{\mK})$ coincides with the substructure induced by $\{[u(x)], [u(y)] \}$  in $\mK_u$, and hence, the latter is isomorphic to $\mS$. \end{claimproof}

We use  the preceding two claims and  Lemma~\ref{prod_of_partial_sl} to get detailed information on the polymorphisms of $\mK.$
From this point on, we write $U^+$ for the set of those $u\in U$ for which $|K_u|\geq 2$ or equivalently $H_u\neq \emptyset$. Now, let $f$ be an $n$-ary operation in $\pol(\mK)$. Clearly, $f$ restricted to any connected component of $\mK^n$ is a homomorphism to some component of $\mK$. 

Since  the $\mK_u$ are the components of $\mK$ and they are reflexive, by item (1) of Lemma~\ref{connected}, it is also clear that any component $\mD$ of $\mK^n$ equals some product $\prod\limits_{\ell=1}^n \mK_{u_{\ell}}$ where $u_1,\dots,u_n\in U$. So, $f|_D$ is a homomorphism from $\prod\limits_{\ell=1}^n \mK_{u_{\ell}}$ to some $\mK_{u_0}, u_0\in U$. If $u_0\in U^+$, since $\mK_{u_0}\subseteq \mS^{H_{u_0}}$, $f|_D=(f_h)_{h\in H_{u_0}}$ where the $f_h$ are homomorphisms from $\prod\limits_{\ell=1}^n \mK_{u_\ell}$ to $\mS$. Clearly, if ${u_0}\not\in U^+$, since  $\mK_{u_0}$ is one-element, $f|_D$ is a constant map.
	
\begin{cla3} If ${u_0}\in U^+$, then any of the homomorphisms $f_h$ as above is either constant or $f_h$ depends on the coordinates  $1\leq \ell_1<\dots<\ell_m\leq n$ with $u_{\ell_1},\dots, u_{\ell_m}\in U^+$ and there exists a unique sequence of homomorphisms $\varphi_{j}\in H_{u_{\ell_j}}$, $1\leq j\leq m$, such that $$f_h(y_1,\dots,y_n)=\bigwedge_{j=1}^m y_{\ell_j}(\varphi_j).$$  \end{cla3}

 \begin{claimproof} By applying Lemma~\ref{prod_of_partial_sl} to the homomorphism $f_h\colon \prod\limits_{\ell=1}^n \mK_{u_\ell} \to \mS$, we obtain 
 $$f_h(y_1,\dots,y_n)=\bigwedge_{\ell=1}^{n} f_h^{\ell}(y_{\ell})$$ 
where $f_h^{\ell}$ is a homomorphism from $\mK_{u_\ell}$ to $\mS$ for any $1\leq \ell\leq n$. 
Suppose that $f_h$ is not constant, and let $1\leq \ell_1<\dots<\ell_m\leq n$ be the coordinates which $f_h$ depends on. Clearly, $u_{\ell_1},\dots, u_{\ell_m}\in U^+$. Then by leaving out the superfluous  meetands (the ones that correspond to coordinates which $f_h$ does not depend on) from the right side of the preceding equality we obtain
$$f_h(y_1,\dots,y_n)=\bigwedge_{j=1}^{m} f_h^{\ell_j}(y_{\ell_j}).$$
By an application of item (4) of Lemma~\ref{retract2}, for any $1\leq j\leq m$, $f_h^{\ell_j}$ is a projection to a unique coordinate $\varphi_{j}\in H_{\ell_j}$, that is, $$f_h^{\ell_j}(y_{\ell_j})=y_{\ell_j}(\varphi_j).$$ Thus,  $$f_h(y_1,\dots,y_n)=\bigwedge_{j=1}^m y_{\ell_j}(\varphi_j)$$ as claimed.~\end{claimproof}

Let $$\mG\coloneqq \dot\bigcup_{u\in U}\mS^{H_u}.$$ Clearly, $U$ is non-empty, and by item (2) of Lemma~\ref{retr}, $H_{\id}\neq \emptyset$. Observe also that by item (3) of Lemma~\ref{connected}, the $\mS^{H_u}$ are connected, so the components of $\mG$ coincide with the $\mS^{H_u}$ where $u\in U$. To complete the proof of the ``only if'' part of the first statement of the lemma, we show that $\mG$ is a compatible structure in $\var V.$

We define a subclone $\mathscr C$ of $\pol(\mG)$. An $n$-ary operation $g$ on $G$ is an element of $\mathscr C$ if and only if for every component  $\mE=\prod\limits_{\ell=1}^n \mS^{H_{u_{\ell}}}$ of $\mG^n$, there is some ${u_0}\in U$ such that $g(E)\subseteq \mS^{H_{u_0}}$, and the following properties are satisfied:

\begin{enumerate} 
\item if $H_{u_0}=\emptyset$, then $g|_E$ is the constant map to the singleton $S^{H_{u_0}}$, and
\item if ${u_0}\in U^+$, then $g|_E=(g_h)_{h\in H_{u_0}}$ where for each $h\in H_{u_0}$, $g_h\colon\mE \to \mS$, and 
\begin{itemize}
\item $g_h$ is either a constant map to $S$, or 
\item $g_h=\bigwedge_{j=1}^m p_j$ where $p_j\colon E \to S$ is a projection to some coordinate in $H_{u_{\ell_j}}$,  $1\leq \ell_1<\dots<\ell_m\leq n$, and $u_{\ell_j}\in U^+$ for each  $1\leq j\leq m$.
\end{itemize}
\end{enumerate}

 Notice that all coordinate maps (the $g_h$) in the definition of $g$ are homomorphisms from $\mE$ to $\mS$. So, $g$ is a polymorphism of $\mG$. It is also easy to see that $\mathscr C$ contains the projection operations of $G$ and is closed under composition. So,  $\mathscr C$ is indeed a subclone of $\pol(\mG)$.




\begin{cla4}
Any $n$-ary operation  $f\in\pol (\mK)$  has a unique extension in $\mathscr C$. 
\end{cla4} 

 \begin{claimproof}
First we prove that every $n$-ary operation  $f\in\pol (\mK)$ extends to an operation  $f^*\in \mathscr C$.  Let $\mD$ be an arbitrary component of $\mK^n$. So, $$\mD=\prod\limits_{\ell=1}^n \mK_{u_{\ell}}  \text{ for some }u_1,\dots,u_n\in U.$$ Let $$\mE=\prod\limits_{\ell=1}^n \mS^{H_{u_{\ell}}}$$ be the component containing $\mD$ in $\mG^n$. If $f|_ D$ is a constant map,  then we define $f^*|_E$ to be the  constant map with the same value. If $f|_D$ is not constant, then $f$ maps $\mD$ into a component $\mK_{u_0}$ of $\mK$ for some ${u_0}\in U^+$. Then $f|_D=(f_h)_{h\in H_{u_0}}$ where the $f_h$ are homomorphisms from $\prod\limits_{\ell=1}^n \mK_{u_\ell}$ to $\mS$. By applying Claim 3, for any $h$ we have that either $f_h$ is constant or is given by
$$f_h(y_1,\dots,y_n)=\bigwedge_{j=1}^m y_{\ell_j}(\varphi_j)$$
for some $u_{\ell_1},\dots, u_{\ell_m}\in U^+$ and $\varphi_{j}\in H_{u_{\ell_j}}$, $1\leq j\leq m$. If $f_h$ is a constant map,  then we define $f^*_h$ to be the  constant map with the same value.  If $f_h$ is not constant, then $f^*_h$ 
defined by $$f^*_h(z_1,\dots,z_n)=\bigwedge_{j=1}^m z_{\ell_j}(\varphi_j)\text{ where }z_{\ell}\in\mS^{H_{u_{\ell}}},\ 1\leq\ell\leq n,$$ extends $f_h$. Then $f^*|_E=(f^*_h)_{h\in H_{u_0}}$ is an extension of $f|_D$. By taking the union of the extensions $f^*|_E$ where $E$ runs through the components of $\mG^n$, we obtain  an extension $f^*\in \mathscr C$ of $f$. 

Let $g\in \mathscr C$ be any extension of $f$. Next we verify that $g$ is uniquely determined by $f$. It suffices to prove that for any components $\mD \text{ of }  \mK^n$ and $\mE\text{ of }\mG^n$ with $D\subseteq E$, $g|_E$ is a unique extension of $f|_D$.
We assume that
$$\mD= \prod\limits_{\ell=1}^n \mK_{u_\ell} \text{ and }\mE=\prod\limits_{\ell=1}^n \mS^{H_{u_{\ell}}}.$$  We also assume that for some ${u_0}\in U$, $\mK_{u_0}\subseteq \mS^{H_{u_0}}$ is the component that  $f|_D$ maps into. Notice that the product $$\mP=u_1(\mS^{\mK})\times \dots \times u_n(\mS^{\mK}) $$ is a weak substructure of $\mD.$ By Claim 2, we know that if $u_{\ell}\in U^+$, then $u_{\ell}(\mS^{\mK})$ is a substructure of $\mK_{u_\ell}$ and is isomorphic to $\mS$, and if $u_{\ell}\not\in U^+$, then $u_{\ell}(\mS^{\mK})$ and $\mK_{u_{\ell}}$ are singletons. Therefore, $\mP$ is a substructure of $\mD$ and is isomorphic to a finite power of $\mS$. 

If $f|_D$ is constant, either $P$ is one-element, so, $D$ and $E$ are one-element and $g|_E=f|_D$, or $P$ has at least two elements. In  the latter case, if ${u_0}\notin U^+$, then $K_{u_0}$ is a singleton and the uniqueness of $g|_E$ is obvious. So, we assume that ${u_0}\in U^+$. Let the coordinate maps of $f|_D$ and $g|_E$ be respectively $f_h\colon\mD \to \mS$ and $g_h\colon\mE \to \mS$ where $h\in H_{u_0}$. If $g|_E$ was not constant, then by the definition of $g$ on $E$, there would be an $h$ and 
some coordinates $1\leq \ell'_1<\dots<\ell'_{m'}\leq n$ which $g_h$ depends on such that 
$u_{\ell'_j}\in U^+$ for all  $1\leq j\leq m'$ and $g_h=\bigwedge\limits_{j=1}^{m'} p_j$ where $p_j\colon E \to S$ is a projection to some coordinate in $H_{u_{\ell'_j}}$. This is impossible, since then $g_h|_P$ would not be constant and, on the other hand, $g_h|_P=f_h|_P$ and the latter is constant. So, for all $h$, $g_h$ and so $g|_E$ must be constant. In particular,  $g|_E$ is the constant map with the same value as $f|_D$. Thus, $g|_E$ is determined by $f|_D$.

If $f|_D$ is not constant, then ${u_0}\in U^+$. Let the $g_h\colon\mE \to \mS$ be the coordinate maps of $g|_E$ where $h\in H_{u_0}$. By definition, for any $h\in H_{u_0}$, either $g_h$  is constant or $g_h$ depends on some coordinates $1\leq \ell'_1<\dots<\ell'_{m'}\leq n$ such that $u_{\ell'_j}\in U^+$ for all  $1\leq j\leq m'$ and 
$$g_h=\bigwedge\limits_{j=1}^{m'} p_j$$ where $p_j\colon E \to S$ is a projection to some coordinate $\varphi'_j$ in $H_{u_{\ell'_j}}$ for each $1\leq j\leq m'$. If $f_h$ is constant, then $g_h$ is constant with the same value by the same argument as in the preceding paragraph. If $f_h$ is not constant, then by Claim 3, there is a sequence $1\leq \ell_1<\dots<\ell_{m}\leq n$ where  $f_h$ depends only on the  coordinates $\ell_1,\dots,\ell_{m}$, and $$f_h(y_1,\dots,y_n)=\bigwedge\limits_{j=1}^{m} y_{\ell_j}(\varphi_j)$$ for all $y_{\ell}\in K_{u_{\ell}}$ where $1\leq \ell\leq n$  and for a unique sequence of $\varphi_{j}\in H_{u_{\ell_j}}$ where $1\leq j\leq m$.
Now, notice that $g_h$ depends on the same coordinates in $\{1,\dots,n\}$ as $g_h|_P$, and $f_h$ depends on the same coordinates in $\{1,\dots,n\}$ as $f_h|_P$.
This follows from the fact that $[u_{\ell}(x)](\varphi)=0$ and $[u_{\ell}(y)](\varphi)=1$ for all $u_{\ell}\in U^+$ and for all $\varphi\in H_{u_{\ell}}$ when $1\leq \ell\leq n$, that is,  the $\ell$-th factor of $P$ consists of the constant $0$ and the constant $1$ tuples. 
Since $f_h|_P=g_h|_P$, we have that $m=m'$, and the two sequences $\ell_1,\dots,\ell_m$ and $\ell'_1,\dots,\ell'_{m'}$ are equal. Thus, $g_h$ depends only on the coordinates $\ell_j$ where $1\leq j\leq m$. Now, we use that the sequence of homomorphisms $\varphi_{j}\in H_{u_{\ell_j}}$, $1\leq j\leq m$, is uniquely determined by the  $\ell_j$ for $f_h$. Since $g_h=\bigwedge\limits_{j=1}^m p_j$ where $p_j\colon E \to S$ is a projection to $\varphi'_{j}$ in $H_{u_{\ell_j}}$ and $g_h$ extends $f_h$,  $\varphi_j=\varphi'_{j}$ for each $j$. So, for any $h$, $g_h$ is unique. Thus, $g|_E$ is determined by $f|_D$.~\end{claimproof}
                  								
So, by Claim 4 and  Lemma~\ref{clonehom}, there exists a clone homomorphism from $\pol (\mK)$ to $\mathscr C$. Since $\mK$ is compatible in $\var V$, there is a clone homomorphism from the clone of $\var V$ to $\pol (\mK)$. Thus, there is a clone homomorphism from the clone of $\var V$  to $\mathscr C$ and, hence, to $\pol(\mG)$. So, $\mG$ must be compatible in  $\var V$. 

The finiteness claim of the lemma  is obvious from how we constructed  the $\mS$-powers in the above proof.
\end{proof}

In order to prove the primeness of the  Hobby-McKenzie filter in $\LL$, we require a modified version of the preceding lemma.

\begin{corollary} \label{kappa} Let $\var V$ be a variety. Then $\var V$ is a non-Hobby-McKenzie variety if and only if
for any large enough cardinal $\kappa$, the ternary structure $\dot\bigcup_{\lambda\leq \kappa}2^\kappa\mS^{\lambda}$ where $\lambda$ stands for a cardinal and $2^\kappa\mS^{\lambda}$ denotes the disjoint union of $2^\kappa$-many copies of $\mS^{\lambda}$, is a compatible structure in $\var V$. 
\end{corollary}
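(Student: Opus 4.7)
The backward direction is a retract argument. If $\mG' \coloneqq \dot\bigcup_{\lambda \leq \kappa} 2^\kappa \mS^\lambda$ is compatible in $\var V$, then $\mS$ is a retract of $\mG'$: let $\iota\colon \mS \hookrightarrow \mG'$ be the inclusion into any one of the $\mS^1$-components, and $\alpha\colon \mG' \to \mS$ be the identity on that component together with the constant map to $1\in S$ on every other component (a homomorphism, by reflexivity of $\mS$). Since $\alpha\iota = \mathrm{id}_\mS$, any Hobby--McKenzie polymorphism of $\mG'$ obtained by interpreting a Hobby--McKenzie term of $\var V$ would transfer through this retract to a Hobby--McKenzie polymorphism of $\mS$. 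But $\pol(\mS)$ consists only of constants and meets by Corollary \ref{cloneofs}, none of which satisfy any Hobby--McKenzie identity. Hence $\var V$ is a non-Hobby--McKenzie variety, by exactly the retract argument used in the closing paragraph of the proof of Lemma \ref{majdnem_jo}.

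For the forward direction, suppose $\var V$ is non-Hobby--McKenzie. I first observe that $\mS$ itself is compatible in $\var V$: with $\mK$ as in the proof of Lemma \ref{majdnem_jo} (from Lemma \ref{K}), item (3) of Lemma \ref{K} exhibits $\mS$ as a retract of the component $\mK_{\id}$, and extending this retraction by a constant map on every other component gives a retraction $\mu\colon \mK \to \mS$. Transferring the algebra $\alg K\in\var V$ through this retract produces an algebra $\alg B\in\var V$ on $\mS$ witnessing compatibility, whence $\mS^\lambda$ is compatible in $\var V$ via the pointwise algebra $\alg B^\lambda$ for every cardinal $\lambda$. Now choose $\kappa$ large enough that the signature of $\var V$ has cardinality at most $2^\kappa$, and let $\alg I$ be the free algebra of $\var V$ on a generating set of cardinality $2^\kappa$; then $|\alg I| = 2^\kappa$. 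Partition the universe $I$ of $\alg I$ into $(\kappa+1)$-many blocks, each of cardinality $2^\kappa$, indexed by the cardinals $\lambda \leq \kappa$, and let $\lambda\colon I \to \{0,1,\dots,\kappa\}$ record this partition. Finally, identify the universe of $\mG'$ with $\bigsqcup_{i\in I} S^{\lambda(i)}$.

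Define an algebra $\alg C$ on this universe by setting, for each $n$-ary basic operation $t$ of $\var V$ and inputs $(x_1,i_1),\dots,(x_n,i_n)$, $t^{\alg C}((x_1,i_1),\dots,(x_n,i_n)) \coloneqq (t^{\alg B^{\lambda(i')}}(\phi_1(x_1),\dots,\phi_n(x_n)),\, i')$, where $i' \coloneqq t^{\alg I}(i_1,\dots,i_n)$ and each $\phi_j\colon \mS^{\lambda(i_j)} \to \mS^{\lambda(i')}$ is a fixed homomorphism depending only on the pair $(i_j, i')$ --- for instance, the map extending a tuple by $1$'s in extra coordinates when $\lambda(i_j) \leq \lambda(i')$, and projection onto the first $\lambda(i')$ coordinates otherwise. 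The identities of $\var V$ hold in $\alg C$ because the target $i'$ respects the identities of $\alg I\in\var V$, the maps $\phi_j$ depend only on $(i_j, i')$, and the value is computed in the $\var V$-algebra $\alg B^{\lambda(i')}$. Relation preservation follows from the $\phi_j$'s being homomorphisms and $t^{\alg B^{\lambda(i')}}$ being a polymorphism, via a case analysis on whether all input components agree (``pure case'') or not (``mixed case''). The main obstacle is carrying out this case analysis cleanly and ensuring that an $\alg I$ of the required cardinality exists --- both resolved by choosing $\kappa$ large enough in the manner above.
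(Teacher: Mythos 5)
Your backward direction is correct and is essentially the paper's argument: $\mS$ is a retract of $\dot\bigcup_{\lambda\leq\kappa}2^\kappa\mS^{\lambda}$, Hobby--McKenzie terms survive retraction (they are idempotent and linear, hence height-1 conditions), and $\pol(\mS)$ contains no Hobby--McKenzie operation. The forward direction, however, has a genuine gap right at its first step. You claim that because $\mS$ is a retract of the compatible structure $\mK$ (via Lemma \ref{K}(3) extended by constants), ``transferring the algebra $\alg K\in\var V$ through this retract produces an algebra $\alg B\in\var V$ on $\mS$.'' A relational retraction does not transfer compatibility: from a retraction $\mu\colon\mK\to\mS$ with coretraction $\iota'$ one only gets the map $f\mapsto\mu\circ f\circ(\iota'\times\cdots\times\iota')$ from $\pol(\mK)$ to $\pol(\mS)$, which preserves arities and variable identifications but not composition, so it is not a clone homomorphism and does not produce an algebra of $\var V$ on $S$. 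This is precisely why the paper cannot conclude that $\mS$ itself is compatible in every non-Hobby--McKenzie variety and instead must work with $\dot\bigcup_{u\in U}\mS^{H_u}$: the unique-extension property of Claim 4, fed into Lemma \ref{clonehom}, is what upgrades the situation to a genuine clone homomorphism $\pol(\mK)\to C\subseteq\pol\bigl(\dot\bigcup_{u}\mS^{H_u}\bigr)$. The condition ``$\mS$ is compatible in $\var V$'' amounts to a clone homomorphism from the \emph{full} clone of $\var V$ into constants-and-meets, whereas non-Hobby--McKenzie only gives one from the clone of $\var V_{\Id}$; bridging that gap is the entire content of Lemmas \ref{retr} and \ref{K}, and it forces the disjoint union of powers into the picture.

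The second half of your forward direction is also not viable as written. The algebra $\alg C$ on $\bigsqcup_{i\in I}S^{\lambda(i)}$, defined by routing the ``component coordinate'' through the free algebra $\alg I$ and transporting arguments by fixed maps $\phi_j$ depending only on $(i_j,i')$, will in general fail the identities of $\var V$: for a composite identity such as $t(s(x))=x$, the value in $\alg C$ involves $\phi'\circ s^{\alg B^{\lambda(i'')}}\circ\phi$ where $\phi$ projects coordinates away and $\phi'$ pads with $1$'s, so information is destroyed and the identity breaks. Your closing sentence concedes that the case analysis is the ``main obstacle,'' but it is not a bookkeeping obstacle --- the construction itself does not respect term composition. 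The paper avoids all of this by never defining operations by hand: starting from the compatible structure $\dot\bigcup_{u\in U}\mS^{\kappa_u}$ of Lemma \ref{majdnem_jo}, it applies only compatibility-preserving constructions --- a $\kappa$-th power (to make the largest exponent $\kappa$ and the size $2^\kappa$), a primitive positive construction via the gadget $\mY$ (to realize every exponent $\lambda\leq\kappa$, since $\mE_0\cong\mS\,\dot\cup\,\me$ and $(\mS\,\dot\cup\,\me)^{\nu}$ has components $\mS^{\nu'}$ for all $\nu'\leq\nu$), and finally a product with the $2^\kappa$-element structure $\mL$ carrying only the diagonal relation (to equalize multiplicities at $2^\kappa$). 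If you want to repair your proof, replace both steps by such closure operations on compatible structures rather than by retractions and ad hoc algebras.
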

\begin{proof}
We prove the ``only if'' part of the claim, the other direction is immediate from the preceding lemma.
Let $\var V$ be an arbitrary non-Hobby-McKenzie variety. Then by the preceding lemma, there are some cardinals $\kappa_u,\ u\in U,$ not all zero, such that $\mD=\dot\bigcup_{u\in U}\mS^{\kappa_u}$ is a compatible relational structure in $\var V$. Let $\kappa$ be  an arbitrary infinite cardinal with $\kappa\geq\kappa_u$ for all $u\in U$ and $\kappa\geq |U|$. Note that by item (3) of Lemma~\ref{connected}, the powers $\mS^{\kappa_u}$ where $u\in U$ are the connected components of $\mD$. We determine the connected components of  the $\kappa$-th power of  $\mD$. Note that item (5) of Lemma~\ref{connected} applies with $K=\{\mS\}$, $|I|=\kappa$, and $\mH_i=\mD$ for all $i\in I$. So each component of $\mD^I$ is of the form
$$\prod_{i\in I} \mS^{\kappa_{u_i}}\cong \mS^{\sum_{i\in I} \kappa_{u_i}}$$ where the $u_i\in U$. Since $|I|=\kappa$ and $\kappa_{u_i}\leq \kappa$ for all $i\in I$, by cardinal arithmetic, $\sum_{i\in I} \kappa_{u_i}\leq \kappa$. Note that there is a component of $\mD^I$ that is isomorphic to $\mS^\kappa$ since there is a non-zero exponent $\kappa_u$ and by choosing $u_i= u$ for all $i\in I$, 
$$\mS^{\sum_{i\in I} \kappa_{u_i}}=\mS^{\kappa\kappa_{u}}=\mS^{\kappa}.$$ So $\mD_1=\mD^\kappa$ is a compatible relational structure in $\var V$ such that $\mD_1$ is a disjoint union of $\mS$-powers and the largest exponent that appears in the $\mS$-power components of $\mD_1$ is $\kappa$ and $|D_1|=2^\kappa$ also holds.

Now, we define a compatible ternary  structure $\mD_2$ of a single relation from $\mD_1$ in $\var V$ such that  $\mD_2$ is a disjoint union of $\mS$-powers where each cardinal at most $\kappa$ appears as an exponent of some $\mS$-power component. The base set of $\mD_2$ is the set of homomorphisms from $\mS$ to $\mD_1$. To define the triples of the only relation of $\mD_2$ we use  the full semilattice structure $\mY$ given in Figure~\ref{fig:gadget}. Let $r$ be a homomorphism from $\mY$ to $\mD_1$.
For any $j\in\{a,b,c\}$, let $r_j$ denote the homomorphism $r\iota_j$ where $\iota_j$ is the homomomorphism from $\mS$ to $\mY$ given by
$\iota_j(0)=d$ and $\iota_j(1)=j$. Now, we let the relation of $\mD_2$ be the set of triples $(r_a,r_b,r_c)$ where $r$ runs through the homomorphisms from $\mY$ to $\mD_1$.

\begin{figure}[H] 
\centering
\includegraphics[scale=1]{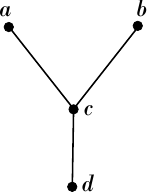}
\caption{The full semilattice $\mY$.} 
\label{fig:gadget}
\end{figure}

We note that the so defined ternary relation is indeed a relation on $D_2$, since each component of its tuples is a homomorphism from $\mS$ to $\mD_1$. It is also easy to see that $\mD_2$ is a reflexive structure. 
Since $\mD_1$ is compatible in $\var V$, there is an algebra $\alg D_1\in \var V$ such that the basic operations of  $\alg D_1$ preserve the relation of $\rel D_1$. Clearly, for any $n$-ary basic operation $t$ of $\alg D_1$ and  any homomorphisms $u_1,\dots, u_n$ from $\mS$ to $\mD_1$,  $t(u_1,\dots, u_n)$ is a homomorphism from $\mS$ to $\mD_1$. Hence, the basic operations of $\alg D_1^2$ preserve the set $D_2\subseteq D_1^2$. 

Moreover, for any $n$-ary basic operation $t$ of $\alg D_1$ and any homomorphisms $v_1,\dots, v_n$ from $\mY$ to $\mD_1$,  $t(v_1,\dots, v_n)$ is a homomorphism from $\mY$ to $\mD_1$, and for  any $j\in \{a,b,c\}$  $$t(v_1\iota_j,\dots, v_n\iota_j)=t(v_1,\dots, v_n)\iota_j.$$ So, the basic operations of $\alg D_1^2$ preserve the relation of $\mD_2$. Thus, $\mD_2$ is compatible 
in a subalgebra of $\alg D_1^2$, hence in $\var V$.

Since $\mY$ is connected, any connected component of $\mD_2$ is contained in the set of the homomorphisms from $\mS$ to $\mS^\nu$ where  $\mS^\nu$ is one of the connected components of the relational structure $\mD_1$ and $\nu\leq\kappa$. Now, we describe the substructure $\mE$ induced by the  homomorphisms from $\mS$ to $\mS^\nu$ in $\mD_2$. By looking at these homomorphisms coordinatewise, it is clear that  $\mE$ is isomorphic to the $\nu$-th power of the structure $\mE_0$ whose base set consists of  the endomorphisms of $\mS$ and whose relation consists of the triples $(r_a,r_b,r_c)\in (S^S)^3$  where $r$ is a homomorphism from $\mY$ to $\mS$. 

We represent here any endomorphism $u$ of $\mS$ by the ordered pair $(u(0), u(1))$.
So $E_0=\{(0,0),(0,1),(1,1)\}$, and the triples of $\mE_0$ are the tuples
$$((0,0),(0,0),(0,0)),\ ((0,0),(0,1),(0,0)),\ ((0,1), (0,0),(0,0)),\ ((0,1),(0,1),(0,1))$$ and $((1,1),(1,1),(1,1)).$\hfill
\medskip

Thus, $\mE_0\cong \mS\mathop{\dot\cup} \me $ where $\me$ is the ternary one-element structure  of a single non-empty relation. So, $\mE\cong (\mS\mathop{\dot\cup} \me )^\nu$. By item (5) of Lemma~\ref{connected}, up to isomorphism, the components of $\mE$ coincide with the powers $\mS^{\nu'}$ where $\nu'$ is a cardinal with $\nu'\leq \nu$. Then, by taking into account that every component of $\mD_2$ is a component of an $\mE$ related to some $\nu\leq \kappa$, every component of $\mD_2$ is isomorphic to $\mS^{\kappa'}$ for some cardinal $\kappa'\leq\kappa$. On the other hand, $\mD_1$ has a component  $\mC\cong\mS^\kappa$, and then, the components of  $\mE$ related to $\mC$ are also components of $\mD_2$. This $\mE$ has a component isomorphic to $\mS^{\kappa'}$ for any cardinal $\kappa'\leq\kappa$, hence $\mD_2$ also has such components.  Since $\mD_1$ has  $2^\kappa$-many elements,  $\mD_2$  also has  $2^\kappa$-many elements.

We saw that $\mD_2$ is compatible relational structure in $\var V$. So, there exists an algebra $\alg D_2\in \var V$ with underlying set $D_2$. Clearly, the basic operations of $\alg D_2$, no matter what they are, preserve the relation $\{(a,a,a)\colon a\in D_2\}$. Hence, the relational structure $\mL$ with base set $D_2$ and single relation $\{(a,a,a)\colon a\in D_2\}$ is a $2^\kappa$-element compatible structure in $\var V$. 

Finally, we let $\mD_3=\mD_2\times \mL$. Since $\mD_2$ and $ \mL$ are compatible in $\var V$, so is $\mD_3$. 
Clearly, by item (1) of Lemma~\ref{connected}, $\mD_3$ is a disjoint union of the powers $\mS^\lambda$ when $\lambda$ runs through the cardinals at most $\kappa$ where each copy of $\mS^\lambda$ appears precisely $2^\kappa$-many times as a component of $\mD_3$.
\end{proof}

Now, by the use of the preceding corollary, we can easily prove the main result of the paper.

\begin{theorem}
The Hobby-McKenzie filter is prime in $\LL$.
\end{theorem}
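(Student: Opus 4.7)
The plan is to prove the contrapositive: if $\var V_1$ and $\var V_2$ are two varieties neither of which is Hobby-McKenzie, then their join $\var V_1\vee\var V_2$ is also not Hobby-McKenzie. Replacing each variety by an interpretability-equivalent copy, I may assume they are presented in disjoint signatures $\Sigma_1,\Sigma_2$ by sets of identities $\Sigma_1',\Sigma_2'$, so that $\var V_1\vee\var V_2$ is presented by $\Sigma_1'\cup\Sigma_2'$. I would then pick a single infinite cardinal $\kappa$ large enough to serve both $\var V_1$ and $\var V_2$ in Corollary \ref{kappa}, obtaining, for $i=1,2$, a clone homomorphism $\alpha_i\colon\clo(\var V_i)\to\pol(\mG)$, where $\mG\coloneqq\dot\bigcup_{\lambda\leq\kappa} 2^\kappa\mS^\lambda$ is the ``universal'' compatible structure supplied by that corollary.

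Next, I would glue the $\alpha_i$ into a single clone homomorphism $\alpha\colon\clo(\var V_1\vee\var V_2)\to\pol(\mG)$. Since the signatures are disjoint, each basic symbol of the join comes from exactly one $\Sigma_i$, so defining $\alpha$ on basic symbols via the appropriate $\alpha_i$ and extending by composition is unambiguous. Well-definedness of $\alpha$ reduces to checking that every identity in $\Sigma_1'\cup\Sigma_2'$ is respected; but an identity from $\Sigma_i'$ only mentions symbols from $\Sigma_i$ and is therefore already preserved by $\alpha_i$. Hence $\mG$ is compatible in $\var V_1\vee\var V_2$.

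Now I would exhibit $\mS$ as a retract of $\mG$: take the identity map on one distinguished copy of $\mS$ (which exists since the exponent $\lambda=1$ is in the range), a coordinate projection on every other $\mS^\lambda$ with $\lambda\geq 1$, and a constant map on each one-point component $\mS^0$. The usual conjugation recipe then gives a clone homomorphism $\pol(\mG)\to\pol(\mS)$. Composing with $\alpha$ and restricting to idempotent terms, I obtain a clone homomorphism from $\clo((\var V_1\vee\var V_2)_{\Id})$ into the clone of idempotent polymorphisms of $\mS$, which by Corollary \ref{cloneofs} is exactly the clone of $\mathcal{S}\mathcal{L}$. Hence $(\var V_1\vee\var V_2)_{\Id}\leq\mathcal{S}\mathcal{L}$, and Theorem \ref{THM} finishes the proof.

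I expect the genuine substance of the argument to be entirely contained in Corollary \ref{kappa} and the machinery feeding into it: once that universal structure $\mG$ is in hand, the rest is essentially formal. The gluing step works because disjoint signatures force identities to be local to each $\var V_i$, and the retraction of $\mG$ onto $\mS$ is built into the very construction of $\mG$. The only small caveat is ensuring the same $\kappa$ can be used for both varieties, which is immediate from the upward-closedness of ``large enough''.
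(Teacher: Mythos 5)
Your proposal is correct and follows the paper's proof almost exactly: both arguments take a single sufficiently large $\kappa$ from Corollary \ref{kappa}, use the disjointness of the signatures to see that $\dot\bigcup_{\lambda\leq\kappa}2^\kappa\mS^{\lambda}$ is compatible in the join, and then conclude non-Hobby-McKenzieness by retracting onto $\mS$ (the paper packages this last step inside the backward direction of Corollary \ref{kappa}, which rests on the same retraction onto $\mS$ and Theorem \ref{THM}). One small caution: conjugating by the retraction--coretraction pair is \emph{not} a clone homomorphism on all of $\pol(\mG)$, since composition is not preserved for polymorphisms that move the distinguished copy of $S$ off itself; it does become one on the idempotent polymorphisms, because an idempotent polymorphism maps the (connected) powers of the distinguished copy of $\mS$ back into that component, and that restricted homomorphism is all your argument actually uses.
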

\begin{proof}
It suffices to prove that the join of any two non-Hobby-McKenzie varieties of disjoint signature is non-Hobby-McKenzie. Let $\var V$ and $\var W$ be arbitrary non-Hobby-McKenzie varieties of disjoint signatures. By the preceding corollary, there exists an infinite cardinal $\kappa$ such that the relational structure $\dot\bigcup_{\lambda\leq \kappa}2^\kappa\mS^{\lambda}$ is a compatible relational structure in both varieties $\var V$ and $\var W$. Therefore, this relational structure is also compatible in $\var V\vee\var W$. Then by using the corollary again, $\var V\vee\var W$ is a non-Hobby-McKenzie variety.
\end{proof} 
A downwardly closed sublattice of a lattice is meant by an {\em ideal}. A {\em principal ideal} of a lattice is an ideal that consists of all elements that are less than or equal to some element of the lattice. The preceding theorem just states that the non-Hobby-McKenzie types constitute an ideal in $\LL$. In the closing part of our article, we shed some light on the structure of this ideal.

Let $\mP$ be any non-trivial reflexive relational structure such that for any cardinal $\kappa$, $\mP^{\kappa}$ is connected. For example, if  $\mP$ is a finite non-trivial reflexive connected structure, then $\mP$ has this property. In particular, $\mS$ has this property. For an infinite cardinal $\kappa$, let $\var V_\kappa$ denote the variety generated by the algebra $\alg A_\kappa$ whose universe is $A_\kappa=\dot\bigcup_{\lambda\leq \kappa}2^{\kappa} P^{\lambda}$ where $\lambda$ runs through cardinals, and whose basic operations are the polymorphisms of $\mA_\kappa=\dot\bigcup_{\lambda\leq \kappa}2^{\kappa}\mP^{\lambda}$. The {\em beth numbers} are the cardinals defined by the transfinite recursion $\beth_0\coloneqq\aleph_0$ and for any non-zero ordinal $\alpha$, $\beth_{\alpha}\coloneqq\sup_{\beta<\alpha}2^{\beth_{\beta}}$ where $\beta$ runs through ordinals.  In~\cite{BGMZ}, the following proposition has been shown for a particular digraph $\mP$. However, the proof, cf. Proposition 4.5 in~\cite{BGMZ}, only uses the properties that $\mP$ is a non-trivial reflexive relational structure, and that for any cardinal $\kappa$, $\mP^{\kappa}$ is connected. So, it is really a proof of the proposition below. 

\begin{proposition} Let $\mP$ be any non-trivial reflexive relational structure such that for any cardinal $\kappa$, $\mP^{\kappa}$ is connected. For any cardinal $\kappa$, let $\var V_{\kappa}$ be the variety defined in the preceding paragraph. Then the following hold.
\begin{enumerate}
\item {For any two infinite cardinals $\kappa_1$ and $\kappa_2$ with $2^{\kappa_1}<2^{\kappa_2}$, $\var V_{\kappa_1}$ interprets in $\var V_{\kappa_2}$, but $\var V_{\kappa_2}$ does not interpret in $\var V_{\kappa_1}$.}
\item {The interpretability types of the varieties $\var V_{\beth_{\alpha}}$ where $\alpha$  is an ordinal form a chain of unbounded size in $\LL$.}
\end{enumerate}
\end{proposition}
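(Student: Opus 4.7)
My plan is to take over almost verbatim the argument of \cite{BGMZ}, which the authors themselves point out uses only the two properties of $\mP$ singled out in the hypothesis: that $\mP$ is non-trivial and reflexive, and that every cardinal power $\mP^\kappa$ is connected. The conceptual work lies in part~(1); part~(2) is a short corollary.

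For the interpretability half of part~(1), i.e.\ $\var V_{\kappa_1}\leq \var V_{\kappa_2}$ when $2^{\kappa_1}<2^{\kappa_2}$, I would first use monotonicity of cardinal exponentiation to deduce $\kappa_1<\kappa_2$ together with $2^{\kappa_1}\leq 2^{\kappa_2}$, so that every $\mP^{\lambda}$-component of $\mA_{\kappa_1}$ (with $\lambda\leq\kappa_1$) embeds, with enough multiplicity, inside $\mA_{\kappa_2}$. Reflexivity of $\mP$ then lets me turn this inclusion into a retraction $\mA_{\kappa_2}\to\mA_{\kappa_1}$ by collapsing the surplus $\mP^{\lambda}$-components of $\mA_{\kappa_2}$ along constant tuples. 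The retraction and its section, together with a unique-extension argument in the spirit of Lemma~\ref{clonehom}, yield the required clone homomorphism $\pol(\mA_{\kappa_1})\to\pol(\mA_{\kappa_2})$.

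The non-interpretability half, $\var V_{\kappa_2}\not\leq\var V_{\kappa_1}$, is where I expect the main obstacle to lie and where the full strength of the connectedness hypothesis enters. Following \cite{BGMZ}, I would argue by contradiction: a clone homomorphism $\pol(\mA_{\kappa_2})\to\pol(\mA_{\kappa_1})$ would realise $\mA_{\kappa_2}$ as a compatible structure in $\var V_{\kappa_1}$, hence as a homomorphic image of the free relational structure generated by $\mA_{\kappa_2}$ in $\var V_{\kappa_1}$. Connectedness of every power $\mP^{\kappa}$ (used exactly as in the proof of Lemma~\ref{retr}) then controls how the components of $\mA_{\kappa_2}$ can distribute over this free structure, and a counting argument bounds the number of distinguishable components that can appear on the $\var V_{\kappa_1}$ side by $2^{\kappa_1}$. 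Since $\mA_{\kappa_2}$ carries $2^{\kappa_2}>2^{\kappa_1}$ distinguishable components, with distinguishability guaranteed by the non-triviality of $\mP$, this is a contradiction. Every step of this argument depends only on non-triviality, reflexivity, and power-connectedness of $\mP$, so it transfers from \cite{BGMZ} without modification.

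For part~(2), the chain of inequalities $2^{\beth_{\alpha}}=\beth_{\alpha+1}\leq\beth_{\alpha'}<2^{\beth_{\alpha'}}$ holding for ordinals $\alpha<\alpha'$ makes part~(1) immediately applicable with $\kappa_1=\beth_{\alpha}$ and $\kappa_2=\beth_{\alpha'}$. Hence the interpretability types of the varieties $\var V_{\beth_{\alpha}}$ form a strictly increasing chain indexed by the proper class of ordinals, and this chain is consequently of unbounded size in $\LL$.
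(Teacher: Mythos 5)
Your overall plan coincides with what the paper itself does: the paper gives no self-contained proof of this proposition either, but simply observes that the argument of \cite{BGMZ} uses only non-triviality, reflexivity, and power-connectedness of $\mP$, and therefore applies verbatim. Your part (2) is correct as stated: for ordinals $\alpha<\alpha'$ one has $2^{\beth_{\alpha}}=\beth_{\alpha+1}\leq\beth_{\alpha'}<2^{\beth_{\alpha'}}$, so part (2) does reduce to part (1).

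However, your sketch of the interpretability half of part (1) contains a genuine error. A retraction $r\colon\mA_{\kappa_2}\to\mA_{\kappa_1}$ with section $e$ does not yield a clone homomorphism. The map it naturally induces, $f\mapsto r\circ f\circ e^{(n)}$, goes from $\pol(\mA_{\kappa_2})$ to $\pol(\mA_{\kappa_1})$ --- the opposite of the direction you need for $\var V_{\kappa_1}\leq\var V_{\kappa_2}$ --- and even in that direction it only preserves minors, since it fails to commute with general composition (because $e\circ r\neq\id$); the other composite $f\mapsto e\circ f\circ r^{(n)}$ commutes with composition but does not send projections to projections. What is actually required, and what Lemma \ref{clonehom} is designed for, is a subclone $C\subseteq\pol(\mA_{\kappa_2})$ such that every $f\in\pol(\mA_{\kappa_1})$ extends to a \emph{unique} member of $C$; establishing that uniqueness (via an explicit description of the polymorphisms of a disjoint union of powers of $\mP$, in the style of Claims 3--4 in the proof of Lemma \ref{majdnem_jo}) is the real content of this direction, and the retraction you construct does not supply it. Your account of the non-interpretability half is likewise only a gesture: which invariant is being counted, and why a clone homomorphism $\pol(\mA_{\kappa_2})\to\pol(\mA_{\kappa_1})$ would have to preserve it, is precisely the part that must be imported from \cite{BGMZ}, so as written that half cannot be verified either.
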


We apply the proposition to $\mP=\mS$. Then the interpretability types of the non-Hobby-McKenzie varieties $\var V_{\beth_{\alpha}}$ 
defined via the relational structures $\mA_{\beth_{\alpha}}=\dot\bigcup_{\lambda\leq \beth_{\alpha}}2^{\beth_{\alpha}}\mS^{\lambda}$
form an unbounded chain $C$ in $\LL$. Moreover, by Corollary~\ref{kappa}, for every non-Hobby-McKenzie interpretability type, there is a member of $C$ that is greater than the given type. In other words, the ideal of the non-Hobby-McKenzie types is the ``union'' of the principal ideals determined by the members of $C$.

We finally remark that each of the relational structures $\mA_\kappa=\dot\bigcup_{\lambda\leq \kappa}2^{\kappa}\mS^{\lambda}$ has a semilattice polymorphism. So, $\mathcal S \mathcal L$  and the full idempotent reduct of the related variety $\var V_{\kappa}$ are interpretable in each other for every cardinal~$\kappa$.

\section*{Acknowledgement}
We would like to thank the anonymous referee for their detailed comments and suggestions that led to an improved version of our manuscript.


\begin{thebibliography}{}


\bibitem{BS} Bentz, W and  Sequeira, L; Taylor’s modularity conjecture holds for linear idempotent varieties. Algebra Universalis 71/2, 101–107 (2014).
\bibitem{B} Bergman, C; Universal algebra, volume 301 of Pure and Applied Mathematics (Boca Raton). CRC Press, Boca Raton, FL, 2012. 
\bibitem{Bi} Birkhoff, G; On the structure of abstract algebras, Mathematical Proceedings of the Cambridge Philosophical Society 31/4, 433-454 (1935).
\bibitem{BGMZ} Bodor, B, Gyenizse, G, Mar\'oti, M, and Z\'adori, L; Taylor is prime,  International Journal of Algebra and Computation 34/6, 857-879 (2024).
\bibitem{BS} Burris, S and Sankappanavar H. P; A course in universal algebra, volume 78 of Graduate Texts in Mathematics. Springer-Verlag, New York-Berlin, 1981.
\bibitem{C} Chicco, A; On the primeness of locally finite idempotent 3-permutability, Algebra Universalis 80/18 (2019).
\bibitem{GT} Garcia O. C and Taylor, W; The lattice of interpretability types of varieties, Mem. Amer. Math. Soc. 50, v+125, 1984.
\bibitem {GMZ1} Gyenizse, G, Mar\'oti, M, and Z\'adori, L; $n$-permutability is not join-prime for $n \geq 5$, Internat. J. Algebra Comput. 30/8, 1717 - 1737 (2020).
\bibitem {GMZ2} Gyenizse, G, Mar\'oti, M, and Z\'adori, L; Congruence permutability is prime, Proc. Amer. Math. Soc. 150/6, 2733–2739 (2022). 
\bibitem{HM} Hobby, D and McKenzie, R; The structure of finite algebras, Contemporary Mathematics 76, American Mathematical Society, Providence, RI, 1988.
\bibitem{KK} Kearnes, K. A and  Kiss, E. W; The shape of congruence lattices, Mem. Amer. Math. Soc. 222, viii+169, 2013.
\bibitem{KKVW} Kozik, M, Krokhin, A, Valeriote, M, and Willard, R; Characterizations of several Maltsev conditions, Algebra Universalis 73, 205-224 (2015).
\bibitem{KT} Kearnes, K. A and Tschantz, S. T; Automorphism groups of squares and of free algebras, Internat. J. Algebra Comput. 17/3, 461-505 (2007).
\bibitem{MMT} McKenzie, R. N, McNulty, G. F, and Taylor W. F; Algebras, lattices, varieties. Vol. I. The Wadsworth $\&$ Brooks/Cole Mathematics Series. Wadsworth $\&$ Brooks/Cole Advanced Books $\&$ Software, Monterey, CA, 1987.
\bibitem{O} Opršal, J; Taylor’s modularity conjecture and related problems for idempotent varieties, Order 35/3, 433–460 (2018).
\bibitem{S} Sequeira, L; On the congruence modularity conjecture, Algebra Universalis 55/4, 495–508 (2006).
\bibitem{VW} Valeriote, M and Willard, R; Idempotent $n$-permutable varieties, Bulletin of the London Mathematical Society 46, 870–880, (2014).
\end{thebibliography}
\end{document}